\numberwithin{equation}{section}
\theoremstyle{plain}
\newtheorem{theorem}{Theorem}[section]
\newtheorem{lemma}[theorem]{Lemma}
\newtheorem{corollary}[theorem]{Corollary}
\newtheorem{proposition}[theorem]{Proposition}
\theoremstyle{definition}
\newtheorem{remark}[theorem]{Remark}
\newtheorem{example}[theorem]{Example}
\begin{document}

\title[Shift-type invariant subspaces]{On polynomially bounded operators with  shift-type invariant subspaces}

\author{Maria F. Gamal'}
\address{
 St. Petersburg Branch\\ V. A. Steklov Institute 
of Mathematics\\
 Russian Academy of Sciences\\ Fontanka 27, St. Petersburg\\ 
191023, Russia  
}
\email{gamal@pdmi.ras.ru}


\subjclass[2010]{ Primary 47A15; Secondary 47A45,  47A60.}

\keywords{Similarity, unilateral shift, invariant subspaces,  unitary asymptote, 
 intertwining relation, polynomially bounded operator, reflexivity}


\begin{abstract} A particular case of  \cite{ker07} was generalized from contractions to polynomially bounded operators 
in \cite{gam19}. Namely, it is proved in  \cite{gam19} that if the unitary asymptote of a polynomially bounded operator $T$ 
 contains the bilateral shift of multiplicity $1$,  then there exists an invariant subspace $\mathcal M$ of $T$ 
such that $T|_{\mathcal M}$ is similar to the unilateral shift of multiplicity $1$. In the present paper, some corollaries of this result 
are given. In particular, reflexivity of polynomially bounded operators described above is proved.\end{abstract}

\maketitle

\section{Introduction}

Let $\mathcal H$ be a (complex, separable) Hilbert space, and let $\mathcal M$ be its
(linear, closed)  subspace.
By $I_{\mathcal H}$ and $P_{\mathcal M}$ the identical operator on $\mathcal H$ and 
the orthogonal projection from $\mathcal H$ onto $\mathcal M$ are denoted, respectively.  

Let $\mathcal L(\mathcal H)$ be the algebra of all (linear, bounded) operators  acting on $\mathcal H$. 
For an operator  $T\in \mathcal L(\mathcal H)$, a subspace $\mathcal M$  of $\mathcal H$ is called \emph{invariant subspace of $T$}, 
if $T\mathcal M\subset\mathcal M$. The complete lattice of all invariant  subspaces of $T$ is denoted by  $\operatorname{Lat}T$. 
The smallest algebra containing $T$ and $I_{\mathcal H}$ which is closed in the weak operator topology is denoted by $\operatorname{Alg}T$. 
The algebra of all $A\in \mathcal L(\mathcal H)$ such that $\operatorname{Lat}T\subset\operatorname{Lat}A$ 
is denoted by $\operatorname{Alg}\operatorname{Lat}T$. An operator  $T\in \mathcal L(\mathcal H)$ is called \emph{reflexive}, if 
$\operatorname{Alg}\operatorname{Lat}T=\operatorname{Alg}T$. Every subnormal operator is reflexive 
(see {\cite[Sec.VII.8]{con}} and references therein), in particular, 
every isometry is reflexive \cite{deddens}. 

For $T\in\mathcal L(\mathcal H)$ and $\{x_k\}_{k=1}^n\subset\mathcal H$ set 
$$\mathcal E_T(\{x_k\}_{k=1}^n)=\vee_{k=1}^n\vee_{j=0}^\infty T^j x_k.$$
The \emph{multiplicity} $\mu_T$ of an operator  $T\in \mathcal L(\mathcal H)$ 
 is the minimum dimension of its reproducing subspaces: 
$$  \mu_T=\min\{n\ : \ \mathcal E_T(\{x_k\}_{k=1}^n) = \mathcal H, \ \ \{x_k\}_{k=1}^n\subset\mathcal H \},$$
where $1\leq n\leq\infty$.
An operator $T$ is called {\it cyclic}, if $\mu_T=1$. 

It is well known and easy to see that if $\mathcal M\in\operatorname{Lat}T$, then 
\begin{equation}\label{muorth} \mu_{P_{\mathcal M^\perp}T|_{\mathcal M^\perp}}\leq\mu_T\leq
  \mu_{T|_{\mathcal M}} + \mu_{P_{\mathcal M^\perp}T|_{\mathcal M^\perp}}  \end{equation}
and \begin{equation}\label{muker}\mu_T\geq \dim\ker T^\ast  \end{equation}
(see, for example, {\cite[II.D.2.3.1]{nik}}).

Let $\mathcal H$ and $\mathcal K$ be Hilbert spaces, and let  $\mathcal L(\mathcal H,\mathcal K)$ be the space of  (linear, bounded) transformations 
  acting from $\mathcal H$ to $\mathcal K$.  
Let $T\in \mathcal L(\mathcal H)$, let $R\in \mathcal L(\mathcal K)$, and let
$X\in \mathcal L(\mathcal H,\mathcal K)$ be a transformation such that 
$X$ {\it intertwines} $T$ and $R$,
that is, $XT=RX$. If $X$ is unitary, then $T$ and $R$ 
are called {\it unitarily equivalent}, in notation:
$T\cong R$. If $X$ is invertible, that is, $X$ has the {\it bounded} 
inverse $X^{-1}$, 
then $T$ and $R$ 
are called {\it similar}, in notation: $T\approx R$. 
It is well known and easy to see that similarity preserves many properties of operators, in particular, reflexivity. 
If $X$ is a {\it quasiaffinity}, that is, $\ker X=\{0\}$ and 
$\operatorname{clos}X\mathcal H=\mathcal K$, then
$T$ is called a {\it quasiaffine transform} of $R$, 
in notation: $T\prec R$. If $T\prec R$ and 
$R\prec T$,
 then $T$ and $R$ are called {\it quasisimilar}, 
in notation: $T\sim R$. 
If $\operatorname{clos}X\mathcal H=\mathcal K$, we write $T \buildrel d \over \prec R$. 
It  follows immediately from the definition of the relation $\buildrel d \over \prec$ that if $T \buildrel d \over \prec R$,
then $\mu_R\leq \mu_T$.

 $\mathbb D$ and $\mathbb T$ denote the open unit disc
and the unit circle, respectively. The normalized Lebesgue measure on $\mathbb T$ is denoted by $m$.   
 $H^\infty$ denotes the Banach algebra of all bounded 
analytic functions in $\mathbb D$ with the uniform norm $\|\cdot\|_\infty$ on $\mathbb D$.  
 $H^2$ denotes the Hardy space on $\mathbb D$, and $L^2=L^2(\mathbb T,m)$.
 The simple unilateral and bilateral shifts $S$ and $U_{\mathbb T}$ are the operators
 of multiplication by the independent variable  on $H^2$ and on $L^2$, respectively. 
It is well known that $\mu_S=1$ and $\mu_{U_{\mathbb T}}=1$. 

An operator $T$ is called a \emph{contraction}, if $\|T\|\leq 1$. An operator $T$ is called {\it polynomially bounded}, if there exists a constant $C$ such that $$\|p(T)\|\leq C\|p\|_\infty \text{ \ for every polynomial } p.$$ 
The smallest such $C$ is called \emph{the polynomial bound of $T$} and is denoted by $C_{{\rm pol},T}$ there. 
 Every contraction $T$  is polynomially bounded with $C_{{\rm pol},T}=1$ by the von Neumann inequality (see, for example, {\cite[Proposition I.8.3]{sfbk}}).

If $T$ is a polynomially bounded operator, then $T=T_a\dotplus T_s$,
where $T_a$ is an  \emph{absolutely continuous (a.c.)}  polynomially bounded operator, 
that is, the $H^\infty$-functional calculus is well defined for $T_a$, and 
$T_s$ is similar to a singular unitary operator, see \cite{mlak} or \cite{ker16}. (Although many results on polynomially 
bounded operators that will be used in the present paper were originally  proved by Mlak, we will refer to \cite{ker16} for the convenience of references.)  
 Clearly,  $S$ and $U_{\mathbb T}$ are isometries. Consequently, they are contraction, therefore, they are  polynomially bounded. 
It is well known that $S$ and $U_{\mathbb T}$ are a.c.. Recall that 
\begin{equation}\label{ssrefl}\operatorname{Alg}\operatorname{Lat}S=\operatorname{Alg}S=
\{\varphi(S)\ :\ \varphi\in H^\infty\},\end{equation}
see, for example, {\cite[Theorem I.4.11]{con}} or {\cite[Theorem IX.3.8]{sfbk}}. 

The following lemma  follows from the definition of a.c. polynomially bounded operators (see {\cite[Lemma 2.1]{gam17}} for a detailed proof).

\begin{lemma}\label{lem1new} Suppose that $T\in\mathcal L(\mathcal H)$ is a polynomially bounded operator and 
$\mathcal H=\mathcal H_a\dotplus \mathcal H_s$, where $\mathcal H_a$, $\mathcal H_s\in\operatorname{Lat}T$, 
 $T|_{\mathcal H_a}$ is a.c. and  $T|_{\mathcal H_s}$  is similar to a singular unitary operator. Let  
$\mathcal M\in\operatorname{Lat}T$. If $T|_{\mathcal M}$ is a.c., then $\mathcal M\subset \mathcal H_a$.\end{lemma}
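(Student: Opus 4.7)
For $x \in \mathcal M$, decompose $x = x_a + x_s$ along $\mathcal H = \mathcal H_a \dotplus \mathcal H_s$; the task is to show $x_s = 0$. Both summands are $T$-invariant, so for every polynomial $p$,
\[
p(T|_{\mathcal M})x \;=\; p(T)x \;=\; p(T|_{\mathcal H_a})x_a + p(T|_{\mathcal H_s})x_s.
\]
The strategy is to evaluate this identity on a suitably chosen bounded polynomial sequence $(p_n)$ and pass to the limit.

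To manufacture $(p_n)$, let $W\in\mathcal L(\mathcal H_s,\mathcal K)$ implement the similarity between $T|_{\mathcal H_s}$ and a singular unitary $U\in\mathcal L(\mathcal K)$, and let $\mu$ be a scalar spectral measure for $U$. Since $\mu\perp m$, a classical construction yields polynomials $(p_n)$ with $\sup_n\|p_n\|_\infty<\infty$, $p_n\to 0$ in the weak-$*$ topology of $H^\infty$ (a condition tested against $L^1(m)$), and $p_n(U)\to I_{\mathcal K}$ in SOT (a condition tested against $\mu$). With this choice, the two sides of the identity behave as follows. Because $T|_{\mathcal M}$ is a.c., the defining weak-$*$-to-WOT continuity of its $H^\infty$-calculus on bounded sets forces $p_n(T|_{\mathcal M})x \to 0$ weakly. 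For the same reason $p_n(T|_{\mathcal H_a})x_a\to 0$ weakly. But $p_n(T|_{\mathcal H_s})x_s = W^{-1}p_n(U)Wx_s\to x_s$ strongly. Taking the weak limit in the displayed identity gives $0 = 0 + x_s$, hence $x_s=0$, and the lemma follows.

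The main obstacle is producing the sequence $(p_n)$ separating the two topologies; this is the technical point underlying the a.c./singular splitting for polynomially bounded operators, and it rests squarely on $\mu\perp m$. The complementary ingredient---weak-$*$-to-WOT continuity of the $H^\infty$-calculus on the unit ball---is essentially the working definition of \emph{absolutely continuous} polynomially bounded operator used in the paper, as extracted from \cite{mlak} or \cite{ker16}. A detailed verification of both ingredients is carried out in \cite[Lemma 2.1]{gam17}, to which the author defers.
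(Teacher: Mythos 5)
Your argument is correct and is the standard proof of this splitting lemma: the paper itself gives no proof, deferring to \cite[Lemma 2.1]{gam17}, and the mechanism you describe (peak-type polynomials that tend to $0$ weak-$*$ against $m$ while tending to the identity in SOT on the singular unitary, exploiting the weak-$*$ continuity of the $H^\infty$-calculus on the a.c.\ parts) is exactly the one the paper uses later in its proof of \eqref{plus}. The only technical wrinkle --- that the peak-function construction naturally produces a double limit $\lim_k\lim_n\varphi_k^n$ over compact carriers $\sigma_k$ of the singular measure, so a regularity/diagonal step is needed to extract a single sequence $(p_n)$ --- is precisely the point you flag and defer to \cite{gam17}, which is appropriate.
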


The following lemma is a straightforward corollary of Lemma \ref{lem1new}. 

\begin{lemma}\label{lem2new} Suppose that $T\in\mathcal L(\mathcal H)$  is a polynomially bounded operator, 
$\{\mathcal M_n\}_n\subset\operatorname{Lat}T$,  $\mathcal H=\bigvee_n\mathcal M_n$, 
and $T|_{\mathcal M_n}$ is a.c. for every $n$. Then $T$ is a.c.. \end{lemma}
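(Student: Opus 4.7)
The plan is to reduce the statement immediately to Lemma~\ref{lem1new} via the canonical decomposition. First I would invoke the decomposition of a polynomially bounded operator quoted just before Lemma~\ref{lem1new}: write $T = T_a \dotplus T_s$ with respect to a splitting $\mathcal H = \mathcal H_a \dotplus \mathcal H_s$, where $\mathcal H_a, \mathcal H_s \in \operatorname{Lat} T$, the restriction $T|_{\mathcal H_a}$ is a.c., and $T|_{\mathcal H_s}$ is similar to a singular unitary operator.

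Next, I would apply Lemma~\ref{lem1new} to each $\mathcal M_n$ individually. By hypothesis, $\mathcal M_n \in \operatorname{Lat} T$ and $T|_{\mathcal M_n}$ is a.c., so the lemma yields $\mathcal M_n \subset \mathcal H_a$ for every $n$. Since $\mathcal H_a$ is a (linear, closed) subspace, taking the join gives
\[
\mathcal H \;=\; \bigvee_n \mathcal M_n \;\subset\; \mathcal H_a,
\]
hence $\mathcal H_a = \mathcal H$ and $\mathcal H_s = \{0\}$. Thus $T = T_a$, which is by definition a.c.

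There is essentially no obstacle here: the whole content is packaged inside Lemma~\ref{lem1new}, and Lemma~\ref{lem2new} is only the observation that the a.c.\ part of $T$ is a closed invariant subspace, so it absorbs any join of a.c.\ invariant subspaces. The only mild subtlety to keep in mind while writing is that the decomposition $\mathcal H = \mathcal H_a \dotplus \mathcal H_s$ is algebraic (a topological direct sum of closed subspaces), not necessarily orthogonal, which is why one must cite the existence of such a splitting from \cite{mlak} or \cite{ker16} rather than producing it by orthogonal projections.
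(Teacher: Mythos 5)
Your proof is correct and is exactly the argument the paper intends: the paper states Lemma~\ref{lem2new} as a straightforward corollary of Lemma~\ref{lem1new} without writing out details, and your reduction via the canonical decomposition $T=T_a\dotplus T_s$, the inclusion $\mathcal M_n\subset\mathcal H_a$ for every $n$, and the resulting $\mathcal H_s=\{0\}$ is precisely that omitted argument.
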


An  operator $T\in\mathcal L(\mathcal H)$ is called  \emph{power bounded}, if $ \sup_{j\geq 0}\|T^j\| < \infty$. Classes $C_{ab}$, where $a$, $b=0, 1, \cdot$, of power bounded operators 
are defined as follows (see {\cite[Sec. II.4]{sfbk}} and \cite{ker89}). 
 $T$ is \emph{of class} $C_{0\cdot}$, if $\|T^jx\|\to 0$ when $j\to\infty$ for every $x\in\mathcal H$. 
$T$ is \emph{of class} $C_{1\cdot}$, if $\inf_{j\geq 0}\|T^jx\|> 0$ for every $0\neq x\in\mathcal H$. $T$ is 
\emph{of class} $C_{\cdot a}$, if $T^*$ is of class $C_{a\cdot}$, and $T$ is \emph{of class} $C_{ab}$,  
if $T$ is of class $C_{a\cdot}$ and  of class  $C_{\cdot b}$.

For a power bounded operator $T$ the \emph{isometric asymptote} $(X_+,T_+^{(a)})$  can be  defined using a Banach limit, 
see \cite{ker89} and \cite{ker16}. Here $T_+^{(a)}$ is an isometry (on a Hilbert space), and 
$X_+$ is the \emph{canonical intertwining mapping}: $X_+T=T_+^{(a)}X_+$.  
The \emph{unitary asymptote} $(X,T^{(a)})$  is a pair where $ T^{(a)}$ is the minimal unitary extension of $T_+^{(a)}$  and $X$ is an extension of $X_+$.  

 Let $\nu$ be a  finite positive Borel measure on $\operatorname{clos}\mathbb D$. Denote by $P^2(\nu)$ and by $S_\nu$ 
the closure of (analytic) polynomials in $L^2(\operatorname{clos}\mathbb D,\nu)$ and the operator  of multiplication by 
the independent variable  on $P^2(\nu)$, respectively. Operators $S_\nu$ are subnormal, see \cite{con}. 
Clearly, $S_m=S$.
\begin{equation}\label{congss}\text{ If  }  w\in L^1(\mathbb T,m), \ w>0 \text{ a.e. on }  \mathbb T \text{  and } \log w\in L^1(\mathbb T,m),  \text{  then } S_{wm}\cong S,\end{equation} 
see {\cite[Ch. III.12, VII.10]{con}} or {\cite[I.A.4.1]{nik}}. 

The  following lemma is a corollary of \cite{bour}, a detailed proof can be found in  \cite{bercpr} or \cite{gam19}. 

\begin{lemma}[\cite{bour},\cite{bercpr}]\label{lembt} Suppose that $T\in\mathcal L(\mathcal H)$ is an a.c. polynomially bounded operator, and 
$x\in\mathcal H$. Then there exists a function $w\in L^1(\mathbb T,m)$ such that 
$w\geq 0$, 
and \begin{equation}
\label{polw21}
\|p(T)x\|^2\leq \int_{\mathbb T}|p|^2w\text{\rm d}m\ \ \text{ for every polynomial } p.\end{equation}\end{lemma}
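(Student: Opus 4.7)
The plan is to derive the lemma from Bourgain's theorem \cite{bour} on bounded linear operators from $H^\infty$ into a Hilbert space.

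Since $T$ is a.c.\ polynomially bounded, the $H^\infty$-functional calculus $\varphi\mapsto\varphi(T)$ is well defined and satisfies $\|\varphi(T)\|\leq C_{{\rm pol},T}\|\varphi\|_\infty$ for every $\varphi\in H^\infty$. Fixing $x\in\mathcal H$, the linear map
\[
\Phi\colon H^\infty\to\mathcal H,\qquad \Phi(\varphi)=\varphi(T)x,
\]
is therefore bounded with $\|\Phi\|\leq C_{{\rm pol},T}\|x\|$, and the defining property of the a.c.\ class moreover makes $\Phi$ continuous from the weak-$*$ topology on $H^\infty$ to the weak topology on $\mathcal H$.

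Now invoke Bourgain's theorem: every bounded linear operator from $H^\infty$ to a Hilbert space is $2$-summing, and for such operators the Pietsch factorization can be realized by a non-negative weight $w\in L^1(\mathbb T,m)$ (rather than merely by a measure on the maximal ideal space of $H^\infty$). With $\|w\|_{L^1}$ controlled by a universal multiple of $\|\Phi\|^2$, this gives
\[
\|\Phi(\varphi)\|^2\leq \int_{\mathbb T}|\varphi|^2\, w\, \mathrm{d}m\qquad\text{for every }\varphi\in H^\infty.
\]
Restricting to analytic polynomials $p\in H^\infty$ yields (\ref{polw21}).

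The only substantive obstacle is Bourgain's theorem itself, whose proof relies on deep harmonic analysis (vector-valued Riesz projection estimates and $H^1$-BMO duality); here one cites it as a black box, and the explicit reduction to the precise form (\ref{polw21}) is carried out in \cite{bercpr} and \cite{gam19}. The fact that the Pietsch weight lives on $\mathbb T$ and is absolutely continuous with respect to $m$ is exactly the place where the a.c.\ hypothesis on $T$ enters, since without it the natural dominating object is a measure on the maximal ideal space that need not restrict to an absolutely continuous measure on $\mathbb T$.
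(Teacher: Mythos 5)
Your proposal is correct and follows the same route as the paper, which itself gives no proof of Lemma \ref{lembt} but simply states that it is a corollary of \cite{bour} with the detailed reduction carried out in \cite{bercpr} or \cite{gam19}. You correctly identify both ingredients: the Grothendieck-type $2$-summing property from \cite{bour} with its Pietsch factorization, and the role of the a.c.\ hypothesis (via weak-$*$ continuity of the functional calculus) in replacing the Pietsch dominating measure on $\mathbb T$ by an absolutely continuous weight $w\in L^1(\mathbb T,m)$ --- which is precisely the content the paper defers to those references.
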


\begin{corollary}\label{corss}Suppose that $T\in\mathcal L(\mathcal H)$ is an a.c. polynomially bounded operator, and 
$x\in\mathcal H$. Then $S\buildrel d\over\prec T|_{\mathcal E_T(x)}$.
\end{corollary}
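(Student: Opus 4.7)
The plan is to exhibit a dense-range intertwiner from $H^2$ to $\mathcal{E}_T(x)$ by using Lemma \ref{lembt} to build such a map from some $P^2(\nu)$-space, and then to arrange the weight so that $P^2(\nu)$ is isomorphic to $H^2$ via (\ref{congss}).

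First I would apply Lemma \ref{lembt} to obtain a nonnegative $w\in L^1(\mathbb{T},m)$ with $\|p(T)x\|^2\le\int_{\mathbb{T}}|p|^2w\,\mathrm{d}m$ for every polynomial $p$. The inequality persists after enlarging the weight, so I replace $w$ by $\widetilde{w}=w+1$. This has two crucial features: $\widetilde{w}\ge 1>0$ a.e., and since $0\le\log(1+t)\le t$ for $t\ge 0$, we have $\log\widetilde{w}\le w\in L^1(\mathbb{T},m)$, so $\log\widetilde{w}\in L^1(\mathbb{T},m)$. By (\ref{congss}) we get $S_{\widetilde{w}m}\cong S$.

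Next I define the map $Y_0$ on polynomials by $Y_0p=p(T)x\in\mathcal{E}_T(x)$. The inequality
\[
\|Y_0p\|^2\le\int_{\mathbb{T}}|p|^2\widetilde{w}\,\mathrm{d}m=\|p\|_{L^2(\widetilde{w}m)}^2
\]
shows that $Y_0$ is contractive from the dense subspace of polynomials in $P^2(\widetilde{w}m)$ into $\mathcal{E}_T(x)$, and hence extends by continuity to a bounded operator $Y\colon P^2(\widetilde{w}m)\to\mathcal{E}_T(x)$. The image of $Y$ contains all vectors $T^jx$ for $j\ge 0$, so $\operatorname{clos}Y P^2(\widetilde{w}m)=\mathcal{E}_T(x)$. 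Since on polynomials $Y(zp)=T(p(T)x)=T|_{\mathcal{E}_T(x)}Yp$, by continuity $Y$ intertwines $S_{\widetilde{w}m}$ with $T|_{\mathcal{E}_T(x)}$, giving $S_{\widetilde{w}m}\buildrel d\over\prec T|_{\mathcal{E}_T(x)}$.

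Composing with the unitary equivalence $S\cong S_{\widetilde{w}m}$ yields a dense-range transformation from $H^2$ to $\mathcal{E}_T(x)$ intertwining $S$ and $T|_{\mathcal{E}_T(x)}$, which is the desired conclusion. The only subtlety I anticipate is the need to modify $w$ to gain the logarithmic integrability required by (\ref{congss}); the shift by $+1$ is the cleanest fix and preserves the norm inequality, so no real obstacle remains.
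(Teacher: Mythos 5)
Your proof is correct and follows essentially the same route as the paper: the paper also applies Lemma \ref{lembt}, modifies the weight to make it bounded away from zero (using $w_\varepsilon=\max(w,\varepsilon)$ rather than your $w+1$, a cosmetic difference), invokes \eqref{congss} to get $S_{w_\varepsilon m}\cong S$, and reads off the dense-range intertwiner from the inequality \eqref{polw21}. Your write-up merely spells out the construction of $Y$ that the paper leaves implicit.
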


\begin{proof}Let $w$ be from Lemma \ref{lembt} applied to $x$.  Take $\varepsilon>0$ and set 
$w_\varepsilon=\max(w,\varepsilon)$. Clearly \eqref{polw21} is fulfilled for $w_\varepsilon$ instead of $w$. 
Furthermore,  \eqref{polw21} with  $w_\varepsilon$ means that 
$S_{ w_\varepsilon m} \buildrel d\over\prec  T|_{\mathcal E_T(x)}$.
 By \eqref{congss},  $S_{ w_\varepsilon m}\cong S$. \end{proof}

\section{Reflexivity}

 Let $T$ be  a polynomially bounded operator (on a Hilbert space),  and let $T\buildrel d\over\prec U_{\mathbb T}$. 
By {\cite[Theorem 4]{ker89}}, 
$\mathbb T\subset \sigma(T)$. By \cite{rej}, $T$ either has a nontrivial hyperinvariant subspace or is reflexive. 
The reflexivity of  contractions $T$ such that  $T\buildrel d\over\prec U_{\mathbb T}$ is proved in  
\cite{tak87} and \cite{tak89}, see also {\cite[Theorem IX.3.8]{sfbk}}. The proof   
can be generalized to polynomially bounded operators. 

For a polynomially bounded operator $T$ set
\begin{equation}\label{bound1}C_{{\rm sim} S,T}=\bigr(\sqrt 2(K^2+2)KC_{{\rm pol},T}+1\bigl)\sqrt{K^2C_{{\rm pol},T}^2+1}KC_{{\rm pol},T}^2,\end{equation}
where $C_{{\rm pol},T}$ is the polynomial bound of $T$ and $K$ is a universal constant from \cite{bour}.

\begin{lemma}\label{lemnew1} Suppose that  $T\in\mathcal L(\mathcal H)$ is an a.c. polynomially bounded operator, $x\in\mathcal H$, 
$\mathcal M_0\in\operatorname{Lat}T$, $Y_0\in\mathcal L(H^2,\mathcal M_0)$ is invertible, $Y_0S=T|_{\mathcal M_0}Y_0$. Then 
for every $c>0$ there exists $\mathcal N\in\operatorname{Lat}T$ and $Y\in\mathcal L(H^2,\mathcal N)$ such that 
$YS=T|_{\mathcal N}Y$, $\|Y\|\leq(1+c)\|Y_0\|$, $\|Y^{-1}\|\leq(1+c)\|Y_0^{-1}\|$ and $\mathcal M_0\vee \mathcal N=\mathcal M_0\vee \mathcal E_T(x)$.
\end{lemma}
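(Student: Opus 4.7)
The plan is to perturb $Y_0$ by a sufficiently small multiple of a dense-range intertwiner from $H^2$ into $\mathcal{E}_T(x)$, which is available precisely because $T$ is a.c.\ polynomially bounded.

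First, I would apply Corollary \ref{corss} to $x$ to obtain a bounded operator $X\in\mathcal L(H^2,\mathcal E_T(x))$ with dense range satisfying $XS=T|_{\mathcal E_T(x)}X$. With $B=\|X\|$ in hand, I would pick $\varepsilon>0$ small enough that both $\varepsilon B\le c\|Y_0\|$ and $\varepsilon B\|Y_0^{-1}\|\le c/(1+c)$ hold, and set
$$Y=Y_0+\varepsilon X,\qquad \mathcal N=Y(H^2),$$
viewing $Y$ a priori as an operator from $H^2$ into $\mathcal H$. The upper bound is immediate: $\|Y\|\le\|Y_0\|+\varepsilon B\le(1+c)\|Y_0\|$. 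For the lower bound,
$$\|Yp\|\ge\|Y_0p\|-\varepsilon\|Xp\|\ge\bigl(\|Y_0^{-1}\|^{-1}-\varepsilon B\bigr)\|p\|,$$
and the second smallness condition converts this into $\|Y^{-1}\|\le(1+c)\|Y_0^{-1}\|$. Since $Y$ is bounded below, $\mathcal N$ is closed and $Y\colon H^2\to\mathcal N$ is a bicontinuous bijection, i.e.\ invertible in the sense of the paper.

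The intertwining identity $YS=T|_{\mathcal N}Y$ follows because both $Y_0$ and $X$ intertwine $S$ with $T$; in particular $T\mathcal N=TY(H^2)=YS(H^2)\subset Y(H^2)=\mathcal N$, so $\mathcal N\in\operatorname{Lat}T$. The inclusion $\mathcal M_0\vee\mathcal N\subset\mathcal M_0\vee\mathcal E_T(x)$ is trivial since $\mathcal N\subset\mathcal M_0+\mathcal E_T(x)$. For the reverse inclusion, for every polynomial $p$ one writes
$$\varepsilon Xp=Yp-Y_0p\in\mathcal N+\mathcal M_0\subset\mathcal M_0\vee\mathcal N,$$
so $X(H^2)\subset\mathcal M_0\vee\mathcal N$, and since $X$ has dense range in $\mathcal E_T(x)$ and $\mathcal M_0\vee\mathcal N$ is closed, $\mathcal E_T(x)\subset\mathcal M_0\vee\mathcal N$, giving the desired equality.

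No step is really hard here; the only thing that requires care is the simultaneous choice of $\varepsilon$ that enforces both norm estimates with the same factor $1+c$. The genuine content of the lemma is the existence of the intertwiner $X$ at all, which is precisely Corollary \ref{corss} and ultimately Bourgain's Lemma \ref{lembt}; once $X$ is available, the perturbation argument above delivers $Y$ as an arbitrarily small modification of $Y_0$.
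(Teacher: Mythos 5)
Your proof is correct and follows exactly the paper's route: the paper likewise takes the dense-range intertwiner $W$ from Corollary \ref{corss}, sets $Yf=Y_0f+\varepsilon Wf$ and $\mathcal N=YH^2$, and asserts the conclusion for small $\varepsilon$. You have merely supplied the routine verifications (the two smallness conditions on $\varepsilon$, closedness of $\mathcal N$, and the span identity via $\varepsilon Xf=Yf-Y_0f$) that the paper leaves as ``clearly''.
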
 

\begin{proof}  By Corollary \ref{corss},  there exists $W\in\mathcal L(H^2,\mathcal E_T(x))$ such that $WS=T|_{\mathcal E_T(x)}W$ and 
$\operatorname{clos}W H^2=\mathcal E_T(x)$.Take $\varepsilon\neq 0$ and set 
$$ Y f = Y_0 f + \varepsilon W f, \ \ \ f\in H^2, \ \ \ \text{ and } \ \ \ \mathcal N = YH^2.$$
Clearly, $Y$ and $\mathcal N$ satisfy to the conclusion of the lemma for sufficiently small $\varepsilon$. 
\end{proof}

\begin{theorem}\label{thmnew1} Suppose that  $T\in\mathcal L(\mathcal H)$ is an a.c. polynomially bounded operator, 
$\mu_T$ is the multiplicity of $T$, and $T\buildrel d \over\prec U_{\mathbb T}$.
Then for every $c>0$ there exist $\{\mathcal M_k\}_{k=0}^{\mu_T}\subset\operatorname{Lat}T$ 
and $Y_k\in\mathcal L(H^2,\mathcal M_k)$ such that $Y_kS=T|_{\mathcal M_k}Y_k$, 
$\|Y_k\|\|Y_k^{-1}\|\leq (1+c)C_{{\rm sim} S,T}$ for all $k$, 
and $\bigvee_{k=0}^{\mu_T}\mathcal M_k=\mathcal H$. Here $1\leq\mu_T\leq\infty$.
\end{theorem}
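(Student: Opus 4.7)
The plan is to combine the main quantitative form of the theorem of \cite{gam19} (which furnishes one invariant subspace $\mathcal M_0$ on which $T$ is similar to $S$) with Lemma \ref{lemnew1} used as a perturbation device: one application per prescribed cyclic vector adjoins another invariant subspace with the same similarity property, and the resulting joins fill out the whole space.

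First I would invoke the main theorem of \cite{gam19}. The hypothesis $T\buildrel d \over\prec U_{\mathbb T}$ forces $U_{\mathbb T}$ to appear as a direct summand of the unitary asymptote $T^{(a)}$, which is exactly the hypothesis of that theorem; applying it produces $\mathcal M_0\in\operatorname{Lat}T$ together with an invertible $Y_0\in\mathcal L(H^2,\mathcal M_0)$ satisfying $Y_0 S=T|_{\mathcal M_0}Y_0$ and, crucially, the explicit estimate $\|Y_0\|\|Y_0^{-1}\|\leq C_{{\rm sim} S,T}$ read off from \eqref{bound1}.

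Next, using the definition of $\mu_T$, I would select $\{x_k\}_{k=1}^{\mu_T}\subset\mathcal H$ with $\bigvee_{k=1}^{\mu_T}\mathcal E_T(x_k)=\mathcal H$, and fix $c'>0$ so small that $(1+c')^2\leq 1+c$. For each $k$ with $1\leq k\leq\mu_T$, apply Lemma \ref{lemnew1} to the data $(\mathcal M_0,Y_0)$, the vector $x_k$, and the parameter $c'$. This produces $\mathcal M_k\in\operatorname{Lat}T$ and an invertible $Y_k\in\mathcal L(H^2,\mathcal M_k)$ with $Y_kS=T|_{\mathcal M_k}Y_k$, $\|Y_k\|\leq(1+c')\|Y_0\|$, $\|Y_k^{-1}\|\leq(1+c')\|Y_0^{-1}\|$ (hence $\|Y_k\|\|Y_k^{-1}\|\leq(1+c')^2 C_{{\rm sim} S,T}\leq(1+c)C_{{\rm sim} S,T}$), and $\mathcal M_0\vee\mathcal M_k=\mathcal M_0\vee\mathcal E_T(x_k)$. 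Taking joins,
\begin{equation*}
\bigvee_{k=0}^{\mu_T}\mathcal M_k=\mathcal M_0\vee\bigvee_{k=1}^{\mu_T}(\mathcal M_0\vee\mathcal M_k)=\mathcal M_0\vee\bigvee_{k=1}^{\mu_T}\mathcal E_T(x_k)=\mathcal H,
\end{equation*}
and the case $\mu_T=\infty$ is handled identically by a countable family.

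The main obstacle I anticipate is the very first step, namely extracting the explicit numerical bound $\|Y_0\|\|Y_0^{-1}\|\leq C_{{\rm sim} S,T}$ from the proof in \cite{gam19}: the bare existence of $\mathcal M_0$ is the stated result, but reading off how the universal constant $K$ from \cite{bour} and the polynomial bound $C_{{\rm pol},T}$ combine into the explicit expression \eqref{bound1} requires a careful trace through the underlying construction. After that bookkeeping step, the rest is a routine iteration of Lemma \ref{lemnew1}, one application per cyclic vector.
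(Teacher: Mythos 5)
Your proposal is correct and follows essentially the same route as the paper: one application of the quantitative theorem of \cite{gam19} to produce $(\mathcal M_0,Y_0)$, followed by one application of Lemma \ref{lemnew1} per cyclic vector $x_k$, with the joins filling out $\mathcal H$. The only cosmetic difference is in the bookkeeping of constants: the paper takes $0<c_1<c$ and quotes \cite{gam19} directly with the bound $(1+c_1)C_{{\rm sim}\,S,T}$, rather than extracting the sharp constant and absorbing the slack into $(1+c')^2$ as you do.
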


\begin{proof} Take $0<c_1<c$. By \cite{gam19}, there exist 
$\mathcal M_0\in\operatorname{Lat}T$ and  $Y_0\in\mathcal L(H^2,\mathcal M_0)$ such that $Y_0$ is invertible, $Y_0S=T|_{\mathcal M_0}Y_0$, 
and $\|Y_0\|\|Y_0^{-1}\|\leq (1+c_1)C_{{\rm sim} S,T}$. Let $\{x_k\}_{k=1}^{\mu_T}\subset \mathcal H$ be such that 
$\mathcal E_T(\{x_k\}_{k=1}^{\mu_T})= \mathcal H$. Let $Y_k$ and $\mathcal M_k$ be constructed in Lemma \ref{lemnew1} applied for every $x_k$ 
with $k\geq 1$. It is easy to see that $\{\mathcal M_k\}_{k=0}^{\mu_T}$ and $\{Y_k\}_{k=0}^{\mu_T}$ satisfy to the conclusion of the theorem.
\end{proof}

\begin{remark}Let in assumptions of Theorem \ref{thmnew1} $T$ be a contraction. Applying 
 \cite{ker07} or {\cite[Theorem IX.3.6]{sfbk}} instead of  \cite{gam19}, one can obtain the conclusion of Theorem  \ref{thmnew1} with $1$ instead of $C_{{\rm sim} S,T}$.    
\end{remark}


The following theorem is a corollary of Theorem  \ref{thmnew1} and is proved exactly as {\cite[Theorem 1]{tak89}}
or  {\cite[Theorem IX.3.8]{sfbk}}. We give a detailed proof for reader's convenience. 

\begin{theorem} \label{thmrefla} Suppose that  $T$ is an a.c.  polynomially bounded operator, and $T\buildrel d \over\prec U_{\mathbb T}$. 
Then $\operatorname{Alg\, Lat}T = \{\varphi(T)\ : \ \varphi\in H^\infty\}$. Consequently, $T$ is reflexive.
\end{theorem}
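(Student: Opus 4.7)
The plan is to establish $\operatorname{Alg\, Lat}T = \{\varphi(T) : \varphi \in H^\infty\}$. The inclusion $\supset$ is immediate from the $H^\infty$-calculus on a.c.\ polynomially bounded operators, since each $\varphi(T)$ lies in $\operatorname{Alg}T \subset \operatorname{Alg\, Lat}T$. For the reverse inclusion, fix $A \in \operatorname{Alg\, Lat}T$ and invoke Theorem \ref{thmnew1} with some $c > 0$ to obtain $\{\mathcal{M}_k\}_{k=0}^{\mu_T}\subset\operatorname{Lat}T$ and $Y_k \in \mathcal{L}(H^2,\mathcal{M}_k)$ with $Y_kS = T|_{\mathcal{M}_k}Y_k$, uniformly bounded similarity constants, and $\bigvee_k\mathcal{M}_k = \mathcal{H}$. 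Since $\operatorname{Lat}(T|_{\mathcal{M}_k})\subset\operatorname{Lat}T$, we have $A|_{\mathcal{M}_k}\in\operatorname{Alg\, Lat}(T|_{\mathcal{M}_k})$. Transporting the reflexivity \eqref{ssrefl} of $S$ through the similarity $T|_{\mathcal{M}_k}\approx S$ yields $\varphi_k \in H^\infty$ with $A|_{\mathcal{M}_k} = \varphi_k(T)|_{\mathcal{M}_k}$, and $\|\varphi_k\|_\infty$ uniformly bounded in $k$.

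Everything then reduces to showing $\varphi_k$ is independent of $k$, since once $\varphi_k = \varphi_0 =: \varphi$ for every $k$, the operators $A$ and $\varphi(T)$ agree on the dense set $\bigvee_k \mathcal{M}_k = \mathcal{H}$, and the theorem follows. A preliminary observation is that $A|_{\mathcal{M}_k}$ commutes with $T|_{\mathcal{M}_k}$ (being a function of it), so $AT = TA$ first on $\bigvee_k\mathcal{M}_k$ and then on $\mathcal{H}$. To force the $\varphi_k$ to coincide I would use the hypothesis $T \buildrel d\over\prec U_{\mathbb T}$: pick $X:\mathcal{H}\to L^2$ with dense range and $XT = U_{\mathbb T}X$. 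Because $A \in \{T\}'$, the universal property of the unitary asymptote (combined with the fact that $U_{\mathbb T}$ is a dense-range quotient of $T^{(a)}$) lifts $A$ to a unique element of $\{U_{\mathbb T}\}' = L^\infty$, i.e.\ there is $\psi \in L^\infty$ with $XA = M_\psi X$. Comparing this with the calculus-intertwining identity $X\varphi_k(T) = \varphi_k(U_{\mathbb T})X$ on $\mathcal{M}_k$ gives $(\psi - \varphi_k)\cdot Xv = 0$ for every $v \in \mathcal{M}_k$. Since $\bigvee_k \mathcal{M}_k = \mathcal{H}$ and $X$ has dense range, the union $\bigcup_k X\mathcal{M}_k$ is dense in $L^2$, so $\psi - \varphi_k$ must vanish on a set of positive measure for each $k$; analyticity of $H^\infty$ functions then forces $\psi \in H^\infty$ and $\psi = \varphi_k$ for all $k$.

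The main obstacle I expect is the production of $\psi \in L^\infty$ with $XA = M_\psi X$, which is the single place where the hypothesis $T \buildrel d\over\prec U_{\mathbb T}$ is used decisively beyond the application of Theorem \ref{thmnew1}. The cleanest route is through the functoriality of the unitary asymptote $(X_T, T^{(a)})$: factor $X = \widetilde X X_T$ for some intertwiner $\widetilde X$ with dense range; lift $A$ to $A^{(a)} \in \{T^{(a)}\}'$ via the asymptote; and verify that the induced operator on $L^2$ commutes with $U_{\mathbb T}$, hence is multiplication by some $\psi \in L^\infty$. A secondary subtlety is that the individual multipliers $g_k := XY_k(1) \in L^\infty$ may vanish on positive-measure sets, so $\psi$ is not pinned down by any single $\mathcal{M}_k$; however, the density of $\bigcup_k X\mathcal{M}_k$ in $L^2$ ensures $\bigcup_k \{g_k \ne 0\}$ has full measure, and each $\varphi_k$ still agrees with $\psi$ on a set of positive measure, identifying them all as the same element of $H^\infty$.
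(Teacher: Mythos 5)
Your first half coincides with the paper's proof: apply Theorem \ref{thmnew1}, note $A|_{\mathcal M_k}\in\operatorname{Alg\,Lat}(T|_{\mathcal M_k})$, and transport \eqref{ssrefl} through $T|_{\mathcal M_k}\approx S$ to get $A|_{\mathcal M_k}=\varphi_k(T)|_{\mathcal M_k}$. The divergence --- and the gap --- is in how you identify the $\varphi_k$ with one another. Your route through $L^2$ has two unrepaired holes. First, the existence of $\psi\in L^\infty$ with $XA=M_\psi X$ is not just ``the main obstacle'' but an unproved claim: granting $A\in\{T\}'$, functoriality of the asymptote gives $A^{(a)}\in\{T^{(a)}\}'$ with $X_TA=A^{(a)}X_T$, but $T^{(a)}\cong U_{\mathbb T}\oplus V$ and $V$ may share spectral multiplicity with $U_{\mathbb T}$, so $\{T^{(a)}\}'$ contains operators with nonzero off-diagonal blocks; the dense-range intertwiner $Z$ with $X=ZX_T$ then has a kernel that is invariant for $T^{(a)}$ but not hyperinvariant, and there is no reason $A^{(a)}$ should descend through $Z$ to an element of $\{U_{\mathbb T}\}'$. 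Second, even granting $\psi$, your own ``secondary subtlety'' is fatal as resolved: writing $XY_k=M_{g_k}|_{H^2}$, the identity $(\psi-\varphi_k)g_k=0$ only gives $\psi=\varphi_k$ a.e.\ on $E_k=\{g_k\neq0\}$. If $E_k\cap E_n$ is null, the fact that $\varphi_k$ and $\varphi_n$ each agree with the \emph{same} $\psi\in L^\infty$ on their respective sets does not identify $\varphi_k$ with $\varphi_n$: an $L^\infty$ function can equal different $H^\infty$ functions on disjoint positive-measure sets. You would need each $g_k$ to be nonvanishing a.e.\ (or the pairwise overlaps to have positive measure), and nothing in your setup guarantees this for an arbitrary dense-range $X$.

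The paper avoids both issues by a purely pairwise argument that never returns to $U_{\mathbb T}$. If $\mathcal M_k\cap\mathcal M_n\neq\{0\}$, this intersection is an invariant subspace of $T|_{\mathcal M_k}\approx S$, hence $T$ restricted to it is again similar to $S$, and $\varphi_k(T)=A=\varphi_n(T)$ there forces $\varphi_k=\varphi_n$. If $\mathcal M_k\cap\mathcal M_n=\{0\}$, one takes $Y=Y_k+cY_n$ with $|c|$ small enough that $Y$ is left-invertible, so $\mathcal M=YH^2\in\operatorname{Lat}T$ with $T|_{\mathcal M}\approx S$ and $A|_{\mathcal M}=\varphi(T|_{\mathcal M})$; computing $AYf$ in two ways yields $Y_k(\varphi-\varphi_k)f=cY_n(\varphi_n-\varphi)f$, and triviality of the intersection plus injectivity of $Y_k,Y_n$ gives $\varphi_k=\varphi=\varphi_n$. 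I recommend you replace your $L^2$-lifting step with this argument; as written, your proof does not close.
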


\begin{proof} Denote by $\mathcal H$ the space in which $T$ acts. By Theorem \ref{thmnew1}, there exist $\frak N\subset\mathbb N$, $\{\mathcal M_k\}_{k\in\frak N}\subset\operatorname{Lat}T$, and 
 $\{Y_k\}_{k\in\frak N}$ 
such that $Y_k$ are invertible and $Y_kS =T|_{\mathcal M_k}Y_k$ for every $k\in\frak N$, and 
\begin{equation}\label{spanmmk}\bigvee_{k\in\frak N}\mathcal M_k=\mathcal H. \end{equation}
Let $A\in\operatorname{Alg\, Lat}T$. Then $ \mathcal M_k\in\operatorname{Lat}A$ and 
$A|_{\mathcal M_k}\in\operatorname{Alg\, Lat}T|_{\mathcal M_k}$ for every $k\in\frak N$. Since $T|_{\mathcal M_k}\approx S$, 
there exist $\{\varphi_k\}_{k\in\frak N}\subset H^\infty$ such that $A|_{\mathcal M_k} = \varphi_k(T|_{\mathcal M_k})$ for every $k\in\frak N$ by \eqref{ssrefl}. 

Let $k$, $n\in\frak N$. If $\mathcal M_k\cap\mathcal M_n\neq\{0\}$, then $\varphi_k(T|_{\mathcal M_k\cap\mathcal M_n})=\varphi_n(T|_{\mathcal M_k\cap\mathcal M_n})$. 
Since $T|_{\mathcal M_k\cap\mathcal M_n}\approx S$, we conclude that $\varphi_k=\varphi_n$.

Suppose that $\mathcal M_k\cap\mathcal M_n=\{0\}$. 
There exists $0\neq c\in\mathbb C$ such that $Y\in\mathcal L(H^2,\mathcal H)$ defined by the  formula
$Yf=Y_kf+cY_nf$, $f\in H^2$, is left-invertible. Set $\mathcal M=YH^2$, then $\mathcal M\in\operatorname{Lat}T$ and $T|_{\mathcal M}\approx S$. 
Therefore, there exists $\varphi\in H^\infty$ such that $A|_{\mathcal M} = \varphi(T|_{\mathcal M})$. Let $0\neq f\in H^2$.  We have
\begin{align*}AYf & =\varphi(T)Yf  =\varphi(T)(Y_kf+cY_nf)=Y_k\varphi f+cY_n\varphi f \\
\text{ and } AYf & = A(Y_kf+cY_nf) = \varphi_k(T)Y_kf + \varphi_n(T)cY_nf = Y_k\varphi_k f + cY_n\varphi_n f.  \end{align*}
Therefore, $Y_k(\varphi-\varphi_k)f=cY_n(\varphi_n-\varphi) f$. Since $\mathcal M_k\cap\mathcal M_n=\{0\}$, 
we conclude that $\varphi=\varphi_k$ and $\varphi_n=\varphi$. 

We obtain that $\varphi_k=\varphi_n$ for every $k$, $n\in\frak N$. Therefore, there exists $\varphi\in H^\infty$ such that
$A|_{\mathcal M_k} = \varphi(T|_{\mathcal M_k})$ for every $k\in\frak N$. By \eqref{spanmmk}, $A=\varphi(T)$.
\end{proof}

The following theorem is a corollary of Theorem \ref{thmrefla}.

\begin{theorem} Suppose that  $T$ is a polynomially bounded operator, and $T\buildrel d \over\prec U_{\mathbb T}$. Then $T$ is reflexive.
\end{theorem}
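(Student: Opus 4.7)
The plan is to reduce to Theorem~\ref{thmrefla} by means of the Mlak--K\'erchy decomposition $T = T_a \dotplus T_s$, where $T_a$ on $\mathcal H_a \in \operatorname{Lat}T$ is absolutely continuous polynomially bounded and $T_s$ on $\mathcal H_s \in \operatorname{Lat}T$ is similar to a singular unitary operator $V$. First I would show that $T_a \buildrel d \over \prec U_{\mathbb T}$: letting $Y \in \mathcal L(\mathcal H, L^2)$ satisfy $YT = U_{\mathbb T}Y$ with $\operatorname{clos}Y\mathcal H = L^2$, the composition of $Y|_{\mathcal H_s}$ with the similarity from $T_s$ to $V$ produces a bounded intertwiner from a singular to an absolutely continuous unitary, which must vanish by a spectral-projection comparison. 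Hence $Y|_{\mathcal H_s} = 0$, $\operatorname{clos}Y\mathcal H_a = L^2$, and Theorem~\ref{thmrefla} applied to $T_a$ yields $\operatorname{Alg}\operatorname{Lat}T_a = \{\varphi(T_a) : \varphi \in H^\infty\}$. Separately, $T_s$ is reflexive because it is similar to a normal operator.

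Now take $A \in \operatorname{Alg}\operatorname{Lat}T$. Since $\mathcal H_a, \mathcal H_s \in \operatorname{Lat}T \subset \operatorname{Lat}A$, one obtains $A = A_a \dotplus A_s$, with $A_a = \varphi(T_a)$ for some $\varphi \in H^\infty$ and $A_s \in \operatorname{Alg}T_s$. The main obstacle is to produce a single sequence of polynomials $p_n$ with $p_n(T) \to A$ in the weak operator topology, that is, achieving $p_n(T_a) \to \varphi(T_a)$ and $p_n(T_s) \to A_s$ simultaneously.

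To overcome this, I would apply Theorem~\ref{thmnew1} to $T_a$ to obtain $\{\mathcal M_k\} \subset \operatorname{Lat}T_a \subset \operatorname{Lat}T$ with $T|_{\mathcal M_k} \approx S$ and $\bigvee_k \mathcal M_k = \mathcal H_a$. For each fixed $k$, the algebraic sum $\mathcal N_k := \mathcal M_k + \mathcal H_s$ is closed (because $\mathcal H_a$ and $\mathcal H_s$ are topologically complementary) and invariant under $T$, and $T|_{\mathcal N_k}$ is similar to the isometry $S \oplus V$, which is reflexive by~\cite{deddens}. Consequently $A|_{\mathcal N_k}$ is a WOT-limit of polynomials in $T|_{\mathcal N_k}$: there exist polynomials $p_n$ with $p_n(T|_{\mathcal M_k}) \to \varphi(T|_{\mathcal M_k})$ and $p_n(T_s) \to A_s$ in WOT. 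Transferring the first convergence through the similarity $T|_{\mathcal M_k} \approx S$ gives $p_n(S) \to \varphi(S)$, so $\|p_n\|_\infty = \|p_n(S)\|$ remains bounded; by polynomial boundedness of $T_a$ and the density $\bigvee_{k'} \mathcal M_{k'} = \mathcal H_a$, this upgrades to $p_n(T_a) \to \varphi(T_a)$ in WOT on all of $\mathcal H_a$. Combined with $p_n(T_s) \to A_s$, one concludes $p_n(T) \to A$ in WOT, so $A \in \operatorname{Alg}T$, and $T$ is reflexive.
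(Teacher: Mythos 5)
Your reduction steps are sound and agree with the paper: the decomposition $T=T_a\dotplus T_s$, the vanishing of the intertwiner on $\mathcal H_s$ (so that $T_a\buildrel d \over\prec U_{\mathbb T}$ and Theorem~\ref{thmrefla} applies), the reflexivity of $T_s$, and the identification $A=A_a\dotplus A_s$ with $A_a=\varphi(T_a)$ and $A_s\in\operatorname{Alg}T_s$. The observation that $\mathcal N_k=\mathcal M_k+\mathcal H_s$ is a closed invariant subspace with $T|_{\mathcal N_k}\approx S\oplus V$, an isometry reflexive by \cite{deddens}, is also correct. The gap is in the gluing step. Membership $A|_{\mathcal N_k}\in\operatorname{Alg}(T|_{\mathcal N_k})$ only provides a \emph{net} of polynomials $p_\alpha$ with $p_\alpha(T|_{\mathcal N_k})\to A|_{\mathcal N_k}$ in the weak operator topology, since $\operatorname{Alg}$ is a WOT closure. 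Your assertion that $\|p_\alpha\|_\infty=\|p_\alpha(S)\|$ ``remains bounded'' is an application of the uniform boundedness principle, which is valid for WOT-convergent \emph{sequences} but fails for nets; WOT-convergent nets of operators need not be norm bounded, and nothing in Deddens' theorem supplies a bounded approximating net. Without that bound you cannot transfer the convergence from the single subspace $\mathcal M_k$ to the remaining $\mathcal M_{k'}$ and then to all of $\mathcal H_a$ by density, so the construction of one net with $p_\alpha(T)\to A$ does not close.

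This is not a cosmetic issue: producing a \emph{bounded} net of polynomials converging simultaneously to $\varphi(S)$ on $H^2$ and to $A_s$ on $\mathcal H_s$ is essentially equivalent to proving that $\operatorname{Alg}(S\oplus V)$ splits, which is exactly what the paper establishes and what your route was meant to avoid. The paper instead shows that the skew projection $\mathcal P$ onto $\mathcal H_s$ along $\mathcal H_a$ lies in $\operatorname{Alg}T$, by taking iterated limits of the uniformly bounded family $\varphi_k^n(T)$ built from peak functions on $m$-null carriers of the spectral measure of $V$; this yields $\operatorname{Alg}T=\operatorname{Alg}T_a\dotplus\operatorname{Alg}T_s$, and reflexivity then follows from \cite{conwaywu} together with the reflexivity of $T_a$ and $T_s$. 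To repair your argument you would need either this peak-function construction or some other proof that elements of $\operatorname{Alg}(T|_{\mathcal N_k})$ admit norm-bounded polynomial approximants; as written, the step is unjustified.
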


\begin{proof} Denote by $\mathcal H$ the space in which $T$ acts. We have $\mathcal H=\mathcal H_a\dotplus\mathcal  H_s$ and 
$T=T_a\dotplus T_s$, where $T_a=T|_{\mathcal H_a}$ is a.c. and $T_s=T|_{\mathcal H_s}$ is similar to 
a singular unitary operator $V$, see \cite{mlak} or \cite{ker16}. 
 It is well known that 
\begin{equation}\label{plus} \operatorname{Alg}T=\operatorname{Alg}T_a\dotplus \operatorname{Alg}T_s. \end{equation}
We sketch the proof briefly. Denote by $\mathcal P$ the (skew) projection onto $\mathcal H_s$ along $\mathcal H_a$.
Denote by $\nu$ the scalar-valued spectral measure of $V$. There exists 
 a  family $\{\sigma_k\}_{k\in\mathbb N}$ of compact subsets of $\mathbb T$ such that $m(\sigma_k)=0$, 
$\sigma_k\subset\sigma_{k+1}$ for every $k\in\mathbb N$ and $\nu(\cup_{k\in\mathbb N}\sigma_k)=\nu(\mathbb T)$.
For every $k\in\mathbb N$ there exists $\varphi_k$ from the disk algebra such that $\varphi_k=1$ on $\sigma_k$ 
and $|\varphi_k|<1$ on $\operatorname{clos}\mathbb D\setminus\sigma_k$ 
(see, for example, {\cite[Ch.6, p.81]{hoffman}}). We have 
$ \lim_k \lim_n \varphi_k^n(T)=\mathcal P$. Therefore, $\mathcal P \in\operatorname{Alg}T$. Equality \eqref{plus} 
follows from the latter inclusion. By \eqref{plus} and \cite{conwaywu}, $T$ is reflexive if and only if $T_a$ and $T_s$ are reflexive. 
Since $T_s$ is similar to a unitary operator, $T_s$ is reflexive.

Since there is no nonzero transformation intertwining a.c. and singular unitaries, 
we conclude that $T_a\buildrel d \over\prec  U_{\mathbb T}$.
 By Theorem \ref{thmrefla}, $T_a$ is reflexive. \end{proof}

\section{On the quantity of  shift-type invariant subspaces which span the whole space}

In this section, it is shown that  if $T\in\mathcal L(\mathcal H)$ is an a.c. polynomially operator with $2\leq\mu_T<\infty$, then the quantity of   shift-type invariant subspaces which span $\mathcal H$ is $\mu_T$. But the estimates of the norms of intertwining transformations become worse than in Theorem \ref{thmnew1}.  

The following simple lemmas are given for convenience of references. Therefore, their proofs are omitted.

\begin{lemma}\label{lemmult} Suppose $n\in\mathbb N$, $n\geq 2$,  $T\in\mathcal L(\mathcal H)$ is an a.c. polynomially bounded operator, 
$x_1, \ldots, x_n\in\mathcal H$, and $\varphi_2,\ldots,\varphi_n\in H^\infty$. 
Then $$\mathcal E_T(x_1, \ldots, x_n)= \mathcal E_T\Big(x_1+\sum_{k=2}^n \varphi_k(T) x_k, \ x_2, \ldots, x_n\Big).$$
\end{lemma}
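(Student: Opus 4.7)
The plan is to prove both inclusions. Set
$$\mathcal M = \mathcal E_T(x_1,\ldots,x_n), \qquad \mathcal N = \mathcal E_T\Big(x_1+\sum_{k=2}^n\varphi_k(T)x_k,\;x_2,\ldots,x_n\Big).$$
The basic observation I will use is that if $\mathcal L$ is any closed $T$-invariant subspace and $y\in\mathcal L$, then $\varphi(T)y\in\mathcal L$ for every $\varphi\in H^\infty$. Since $T$ is an a.c.\ polynomially bounded operator, the $H^\infty$-functional calculus is WOT-continuous with respect to the bounded pointwise convergence topology on $H^\infty$: taking, for instance, the Fejér (or dilation) approximants $\varphi_r(z)=\varphi(rz)$ and their polynomial partial sums, one obtains a sequence of polynomials $p_j$ with $\sup_j\|p_j\|_\infty<\infty$ and $p_j(T)\to\varphi(T)$ in WOT. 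Then $p_j(T)y\in\mathcal L$ for all $j$, and the weak limit $\varphi(T)y$ lies in $\mathcal L$ because $\mathcal L$ is weakly closed.

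First I would prove $\mathcal N\subseteq\mathcal M$. Each $x_k$ with $k\geq 2$ is in $\mathcal M$ by construction; applying the observation with $\mathcal L=\mathcal M$ and $y=x_k$, one gets $\varphi_k(T)x_k\in\mathcal M$ for every $k\geq 2$. Hence $x_1+\sum_{k=2}^n\varphi_k(T)x_k\in\mathcal M$, and since $\mathcal N$ is by definition the smallest closed $T$-invariant subspace containing these generators, $\mathcal N\subseteq\mathcal M$.

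For the reverse inclusion, I would apply the same observation with $\mathcal L=\mathcal N$ and $y=x_k$ for each $k\geq 2$, yielding $\varphi_k(T)x_k\in\mathcal N$. Then
$$x_1 \;=\; \Big(x_1+\sum_{k=2}^n\varphi_k(T)x_k\Big)\;-\;\sum_{k=2}^n\varphi_k(T)x_k\;\in\;\mathcal N,$$
and the other generators $x_2,\ldots,x_n$ are in $\mathcal N$ by construction. Therefore $\mathcal M\subseteq\mathcal N$, completing the argument.

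The only nontrivial input is the WOT-continuity of the $H^\infty$-functional calculus on the a.c.\ part, i.e., the fact that invariant subspaces of $T$ are automatically invariant for every $\varphi(T)$ with $\varphi\in H^\infty$; this is precisely what absolute continuity buys us and is the reason the hypothesis appears in the statement. Once that is in hand, the lemma is a purely formal algebraic manipulation, which is consistent with the authors' decision to omit the proof.
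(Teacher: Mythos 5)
Your proof is correct and is exactly the routine argument the paper intends by omitting the proof: the only substantive point is that a closed invariant subspace of an a.c.\ polynomially bounded operator is invariant under $\varphi(T)$ for every $\varphi\in H^\infty$ (via boundedly pointwise/weak-$*$ convergent polynomial approximants and weak closedness of the subspace), after which both inclusions are immediate algebra. Nothing to add.
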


\begin{lemma}\label{lemsupp2} Suppose $f$, $g\in L^2$, and $\varphi\in H^\infty$ is such that 
$$|\varphi(\zeta)|=\begin{cases} 1, & \text{ if } \ |f(\zeta)| \neq |g(\zeta)|,\\ 2, & \text{ if } \ |f(\zeta)| = |g(\zeta)|.
\end{cases}$$
Then $\operatorname{ess}  \operatorname{supp} f\cup \operatorname{ess}  \operatorname{supp} g=\operatorname{ess}  \operatorname{supp}(f+\varphi g)$.\end{lemma}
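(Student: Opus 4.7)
The plan is to prove the set equality by establishing the two inclusions separately, both at the pointwise (a.e.) level. The inclusion $\operatorname{ess\,supp}(f+\varphi g)\subset\operatorname{ess\,supp}f\cup\operatorname{ess\,supp}g$ is immediate: wherever $f(\zeta)=0$ and $g(\zeta)=0$ one automatically has $(f+\varphi g)(\zeta)=0$, so the support of $f+\varphi g$ is contained in the union of the supports of $f$ and $g$.

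For the reverse inclusion, I would argue by contradiction. Fix $\zeta$ with $(f(\zeta),g(\zeta))\neq(0,0)$ and suppose that $f(\zeta)+\varphi(\zeta)g(\zeta)=0$. Then $|f(\zeta)|=|\varphi(\zeta)|\,|g(\zeta)|$, and I would branch on the two cases built into the definition of $|\varphi|$. If $|f(\zeta)|\neq|g(\zeta)|$, then $|\varphi(\zeta)|=1$, whence $|f(\zeta)|=|g(\zeta)|$, contradicting the case hypothesis. If $|f(\zeta)|=|g(\zeta)|$, then $|\varphi(\zeta)|=2$, so $|f(\zeta)|=2|g(\zeta)|=2|f(\zeta)|$, forcing $f(\zeta)=g(\zeta)=0$ and contradicting the choice of $\zeta$. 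Hence $(f+\varphi g)(\zeta)\neq 0$, giving $\zeta\in\operatorname{ess\,supp}(f+\varphi g)$.

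I do not expect any real obstacle. The numerical constants $1$ and $2$ in the hypothesis are tuned precisely so that in each of the two regions the equation $f+\varphi g=0$ is incompatible with at least one of $f,g$ being non-zero at $\zeta$; the argument uses only the modulus of $\varphi$ and is insensitive to its argument, which is why the lemma is stated purely in terms of $|\varphi|$. All identities should be read as holding $m$-a.e., and the essential supports are understood modulo $m$-null sets, which is consistent since the piecewise condition on $|\varphi|$ is itself an a.e. statement.
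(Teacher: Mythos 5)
Your argument is correct: the identity $|f|=|\varphi||g|$ forced by a zero of $f+\varphi g$ is incompatible with both case definitions of $|\varphi|$ unless $f(\zeta)=g(\zeta)=0$, so the a.e.\ zero set of $f+\varphi g$ is exactly the intersection of those of $f$ and $g$, which gives the equality of essential supports. The paper explicitly omits the proof as simple, and your pointwise case analysis is evidently the intended argument.
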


The following lemma can be easy proved by induction with applying of Lemma \ref{lemsupp2}. Therefore, its proof is omitted.

\begin{lemma}\label{lemsuppn} Suppose  $n\in\mathbb N$, $n\geq 2$,  and $f_1,\ldots, f_n\in L^2$. 
Then there exist $\varphi_2,\ldots,\varphi_n\in H^\infty$ 
such that $$\cup_{k=1}^n\operatorname{ess}  \operatorname{supp} f_k 
=\operatorname{ess}  \operatorname{supp} ( f_1+\sum_{k=2}^n \varphi_k f_k).$$ \end{lemma}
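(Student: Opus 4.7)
The plan is to prove this by induction on $n$, using Lemma \ref{lemsupp2} at each step to absorb one additional function into the sum. The base case $n=2$ is exactly Lemma \ref{lemsupp2} applied to $f_1$ and $f_2$: one takes $\varphi_2 \in H^\infty$ with $|\varphi_2|$ equal to $1$ where $|f_1| \neq |f_2|$ and equal to $2$ where $|f_1| = |f_2|$; since $\log|\varphi_2| \in L^\infty \subset L^1(\mathbb{T},m)$, an outer function with that prescribed modulus exists, so such a $\varphi_2$ is available.

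For the inductive step, suppose the conclusion holds for any $n-1$ functions in $L^2$, where $n \geq 3$. Given $f_1, \ldots, f_n \in L^2$, apply the inductive hypothesis to $f_1, \ldots, f_{n-1}$ to obtain $\varphi_2, \ldots, \varphi_{n-1} \in H^\infty$ with
\[
\operatorname{ess \, supp}\Bigl( f_1 + \sum_{k=2}^{n-1} \varphi_k f_k \Bigr) = \bigcup_{k=1}^{n-1}\operatorname{ess\, supp} f_k.
\]
Set $F = f_1 + \sum_{k=2}^{n-1} \varphi_k f_k \in L^2$. Now apply Lemma \ref{lemsupp2} to the pair $F$ and $f_n$: there exists $\varphi_n \in H^\infty$ with $|\varphi_n| \in \{1,2\}$ chosen as in that lemma, so that
\[
\operatorname{ess \, supp}(F + \varphi_n f_n) = \operatorname{ess\, supp} F \cup \operatorname{ess\, supp} f_n = \bigcup_{k=1}^n \operatorname{ess\, supp} f_k.
\]
Since $F + \varphi_n f_n = f_1 + \sum_{k=2}^n \varphi_k f_k$, this completes the induction.

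There is no substantial obstacle; the only point to verify is the existence of the required $H^\infty$ function in Lemma \ref{lemsupp2}, which follows from standard outer function theory because $\log|\varphi_k|$ is bounded. The rest is a one-function-at-a-time bookkeeping argument driven entirely by Lemma \ref{lemsupp2}.
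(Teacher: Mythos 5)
Your proof is correct and follows exactly the route the paper indicates: the paper omits the proof but states that the lemma ``can be easy proved by induction with applying of Lemma \ref{lemsupp2},'' which is precisely your one-function-at-a-time induction. Your added remark on the existence of the outer function $\varphi_k$ with prescribed bounded modulus is a sensible (and standard) supplement.
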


The following theorem is a corollary of  Lemma \ref{lem2new} and the definition of multiplicity.  Therefore, its proof is omitted.

\begin{theorem} Suppose that  $n\in\mathbb N$, $T\in\mathcal L(\mathcal H)$ is a polynomially bounded operator,  
there exist $\mathcal M_1, \ldots, \mathcal M_n\in\operatorname{Lat}T$ such that 
$T|_{\mathcal M_k}\approx S$, $k=1, \ldots, n$, and $\bigvee_{k=1}^n\mathcal M_k=\mathcal H$. 
Then $\mu_T\leq n$ and $T$ is a.c..\end{theorem}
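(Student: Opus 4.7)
The plan is to produce $n$ cyclic vectors whose combined orbits span $\mathcal H$, and separately to deduce absolute continuity from Lemma \ref{lem2new}. Both pieces are immediate from what is already in the paper, so the proof reduces to bookkeeping.

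For the multiplicity estimate: since $\mu_S=1$ and $T|_{\mathcal M_k}\approx S$, each $T|_{\mathcal M_k}$ is cyclic. Concretely, choosing invertible $X_k\in\mathcal L(H^2,\mathcal M_k)$ with $X_kS=T|_{\mathcal M_k}X_k$, the vector $x_k:=X_k\mathbf{1}$ satisfies $\mathcal E_{T|_{\mathcal M_k}}(x_k)=\mathcal M_k$, and since $\mathcal M_k\in\operatorname{Lat}T$ one has $T^j x_k\in\mathcal M_k$ for all $j\ge 0$, so that $\mathcal E_T(x_k)=\mathcal E_{T|_{\mathcal M_k}}(x_k)=\mathcal M_k$. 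Consequently
$$\mathcal E_T(x_1,\ldots,x_n)=\bigvee_{k=1}^n\mathcal E_T(x_k)=\bigvee_{k=1}^n\mathcal M_k=\mathcal H,$$
so by the definition of multiplicity $\mu_T\leq n$.

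For absolute continuity: $S$ is a.c., and similarity preserves absolute continuity (the $H^\infty$-functional calculus of $T|_{\mathcal M_k}$ is obtained by conjugating that of $S$ by $X_k$). Hence each $T|_{\mathcal M_k}$ is a.c. Since $\mathcal H=\bigvee_{k=1}^n\mathcal M_k$ with each $\mathcal M_k\in\operatorname{Lat}T$, Lemma \ref{lem2new} applied to the finite family $\{\mathcal M_k\}_{k=1}^n$ gives that $T$ itself is a.c.

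There is no real obstacle to overcome; the only minor point worth recording is the identity $\mathcal E_T(x_k)=\mathcal E_{T|_{\mathcal M_k}}(x_k)$ used in the first step, which is forced by the invariance of $\mathcal M_k$. This is presumably why the author chose to omit the proof.
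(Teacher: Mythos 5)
Your proof is correct and follows exactly the route the paper intends: the paper omits the proof, noting only that the theorem is a corollary of Lemma \ref{lem2new} and the definition of multiplicity, which is precisely the two-part argument you give (cyclic vectors $x_k$ with $\mathcal E_T(x_k)=\mathcal M_k$ for the multiplicity bound, and similarity-invariance of absolute continuity plus Lemma \ref{lem2new} for the a.c.\ claim).
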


 \begin{lemma}\label{lemcyclic} Suppose that $n\in\mathbb N$, $n\geq2$, 
  $T\in\mathcal L(\mathcal H)$, $\mathcal M$, $\mathcal N\in\operatorname{Lat}T$, 
$\mathcal M\subset\mathcal N$, $\{h_k\}_{k=1}^n\subset\mathcal M$,  $\{u_k\}_{k=1}^n\subset\mathcal N\ominus\mathcal M$, $ \{v_k\}_{k=2}^n\subset\mathcal N^\perp$, 
\begin{equation}\label{cyclicnn}\mathcal E_T( h_1\oplus u_1\oplus 0 )=\mathcal N \end{equation} and  
\begin{equation}\label{cyclic}\mathcal E_T(h_1\oplus u_1\oplus 0, \ h_k\oplus u_k\oplus v_k, \ \ k=2, \ldots, n)= \mathcal H.\end{equation}
 Then for every  $h_0\in \mathcal M$ 
$$\mathcal E_T(h_0\oplus u_1\oplus v_2, ( h_0-h_1)\oplus 0\oplus v_k, \ k=2, \ldots, n)= \mathcal H.$$\end{lemma}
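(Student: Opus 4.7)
The plan is to write $\mathcal F := \mathcal E_T(h_0\oplus u_1\oplus v_2, \ (h_0-h_1)\oplus 0\oplus v_k,\ k=2,\ldots,n)$ and prove $\mathcal F=\mathcal H$ in two stages: first recover $\mathcal N\subset\mathcal F$ by producing the original cyclic vector $h_1\oplus u_1\oplus 0$ inside $\mathcal F$, and then recover every $h_k\oplus u_k\oplus v_k$ for $k\geq 2$ so that hypothesis \eqref{cyclic} forces $\mathcal F=\mathcal H$. The orthogonal decomposition $\mathcal H=\mathcal M\oplus(\mathcal N\ominus\mathcal M)\oplus\mathcal N^\perp$ is used throughout, and I will repeatedly exploit the fact that $\mathcal F$ is $T$-invariant.

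The single observation that drives the proof is the algebraic identity
\begin{equation*}
(h_0\oplus u_1\oplus v_2)-((h_0-h_1)\oplus 0\oplus v_2)=h_1\oplus u_1\oplus 0.
\end{equation*}
Thus $h_1\oplus u_1\oplus 0\in\mathcal F$, and since $\mathcal F$ is $T$-invariant, hypothesis \eqref{cyclicnn} gives $\mathcal N=\mathcal E_T(h_1\oplus u_1\oplus 0)\subset\mathcal F$.

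With $\mathcal N\subset\mathcal F$ in hand, for each $k=2,\ldots,n$ the element $(h_0-h_1)\oplus 0\oplus 0$ lies in $\mathcal M\subset\mathcal N\subset\mathcal F$; subtracting it from the $k$-th generator of $\mathcal F$ yields $0\oplus 0\oplus v_k\in\mathcal F$. Adding $h_k\oplus u_k\oplus 0\in\mathcal N\subset\mathcal F$ then produces $h_k\oplus u_k\oplus v_k\in\mathcal F$ for every $k=2,\ldots,n$. Combined with $h_1\oplus u_1\oplus 0\in\mathcal F$, hypothesis \eqref{cyclic} yields $\mathcal F\supset\mathcal E_T(h_1\oplus u_1\oplus 0,\ h_k\oplus u_k\oplus v_k,\ k=2,\ldots,n)=\mathcal H$, as desired.

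There is essentially no obstacle: the entire content is the subtraction producing $h_1\oplus u_1\oplus 0$ from the first two new generators, and the remaining steps are purely bookkeeping within the triple orthogonal decomposition. The only point worth watching is that at each manipulation the vectors being added or subtracted must already be known to lie in $\mathcal F$, which is guaranteed once $\mathcal N\subset\mathcal F$ is established.
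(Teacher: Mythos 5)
Your proof is correct and follows essentially the same route as the paper: the key subtraction producing $h_1\oplus u_1\oplus 0$ from the first two generators, the resulting inclusion $\mathcal N\subset\mathcal F$ via \eqref{cyclicnn}, the recovery of each $h_k\oplus u_k\oplus v_k$ by adjusting the $k$-th generator modulo $\mathcal N$, and the final appeal to \eqref{cyclic}.
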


\begin{proof} Denote by $\mathcal E$ the space from the left side of the conclusion of the lemma. Clearly, 
$$ h_1\oplus u_1\oplus 0=(h_0\oplus u_1\oplus v_2)-((h_0-h_1)\oplus 0\oplus v_2)\in\mathcal E.$$
By the latter inclusion and \eqref{cyclicnn}, $\mathcal N\subset\mathcal E$. 
Therefore, $h\oplus u\oplus v_k\in\mathcal E$ 
for $k=2, \ldots, n$ and for every $h\in \mathcal M$ and $u\in\mathcal N\ominus\mathcal M$. It remains to apply \eqref{cyclic}.\end{proof}

\begin{proposition}\label{propnew1} Suppose that $n\in\mathbb N$, $C>0$,   $T\in\mathcal L(\mathcal H)$ is an a.c. polynomially bounded operator, $U$ is an a.c. unitary operator, $X$ is a transformation such that  $XT=UX$,  
$\mathcal M\in\operatorname{Lat}T$, $\|Xh\|\geq \|h\|/C$ for every $h\in\mathcal M$, 
 $U|_{X\mathcal M}\cong S$, and $\{x_k\}_{k=1}^n\subset \mathcal H$. Then for every $c>0$ there exists $h \in\mathcal M$ such that 
$$\|Xx\|\geq \frac{\|x\|}{(1+c)\sqrt{2(C^2+1)}}$$ for every $ x\in\mathcal E_T(h+x_k)$ and every $1\leq k\leq n$, 
and $U|_{X\mathcal E_T(h+x_k)}\cong S$.
\end{proposition}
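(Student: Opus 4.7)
The plan is to choose $h\in\mathcal M$ so that $Xh$ pointwise dominates both $Xx_k$ and a scalar weight attached to $x_k$; this will simultaneously yield the quantitative lower bound on $X$ and the shift structure of each cyclic subspace.

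First I would reformulate the required estimate. For any polynomial $p$, $p(T)h\in\mathcal M$, so the assumption $\|Xy\|\ge\|y\|/C$ on $\mathcal M$ gives $\|p(T)h\|\le C\|p(U)Xh\|$. Combined with $\|p(T)(h+x_k)\|^2\le 2\|p(T)h\|^2+2\|p(T)x_k\|^2$, the target bound
\[
\|p(T)(h+x_k)\|^2\le 2(C^2+1)(1+c)^2\|p(U)(Xh+Xx_k)\|^2
\]
follows from the two inequalities
\begin{align*}
\|p(U)Xh\|&\le(1+c)\|p(U)(Xh+Xx_k)\|,\\
\|p(T)x_k\|&\le(1+c)\|p(U)(Xh+Xx_k)\|,
\end{align*}
for every polynomial $p$ and every $k$. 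Lemma \ref{lembt} furnishes $w_k\in L^1_+(\mathbb T,m)$ with $\|p(T)x_k\|^2\le\int|p|^2w_k\,dm$; after realizing $U$ as multiplication by the coordinate on a suitable scalar $L^2$ space (restricted to the $U$-cyclic piece generated by $X\mathcal M$ together with the $Xx_k$), both inequalities reduce to pointwise estimates of the form $|Xh|^2,\,w_k\le(1+c)^2|Xh+Xx_k|^2$ almost everywhere.

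Next I would construct $h$. Since $U|_{X\mathcal M}\cong S$, the subspace $X\mathcal M$ has Beurling--Lax form: its elements may be written as $Xh=\varphi f$ for a fixed function $\varphi$ of modulus $1$ a.e.\ (in the scalar case) and $f$ varying over $H^2$. The classical outer-function construction supplies an outer $f\in H^2$ with $|f|=g$ for any preassigned $g>0$ a.e.\ with $g\in L^2$ and $\log g\in L^1$. Taking $g=1+(1+1/c)\max_k(|Xx_k|+\sqrt{w_k})$ (so $g\ge 1$, $g\in L^2$, $\log g\in L^1$) yields $|Xh|=g\ge(1+1/c)(|Xx_k|+\sqrt{w_k})$, and hence $|Xh+Xx_k|\ge|Xh|-|Xx_k|\ge|Xh|/(1+c)\ge\sqrt{w_k}/(1+c)$ a.e., which delivers both target pointwise inequalities.

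The relation $U|_{X\mathcal E_T(h+x_k)}\cong S$ is then immediate from \eqref{congss}: the spectral measure of $X(h+x_k)$ under $U$ is $|X(h+x_k)|^2\,dm$, which is positive a.e.\ and log-integrable by the lower bound just established. The polynomial estimate extends to every $x\in\mathcal E_T(h+x_k)$ by continuity. The main obstacle will be implementing this outer-function construction uniformly in $k$: a single $h$ must dominate every pair $(|Xx_k|,\sqrt{w_k})$ at once while still lying in $X\mathcal M$, and the $L^2$-integrability of $g/|\varphi|$ may require a harmless truncation. When $U$ has higher spectral multiplicity the scalar pointwise inequalities are replaced by vector/matrix-valued analogues, but the outer-function construction adapts verbatim.
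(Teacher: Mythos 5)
Your proposal follows essentially the same route as the paper's proof: normalize so that $X\mathcal M$ becomes $H^2$ inside a scalar $L^2$ on which $U$ acts as $U_{\mathbb T}$, invoke Lemma \ref{lembt} to produce weights $w_k$, and pick $h\in\mathcal M$ so that $|Xh|$ is an outer function pointwise dominating $\frac{1+c}{c}\bigl(|Xx_k|+w_k^{1/2}\bigr)$ and bounded away from $0$. Your splitting $\|p(T)(h+x_k)\|^2\le 2\|p(T)h\|^2+2\|p(T)x_k\|^2$ differs cosmetically from the paper's (which splits along $\mathcal M\oplus\mathcal M^\perp$ and applies Lemma \ref{lembt} only to $P_{\mathcal M^\perp}x_k$), but both yield the constant $(1+c)\sqrt{2(C^2+1)}$, and your two side worries are unfounded: $n$ is finite, so a single maximum over $k$ works, and the Beurling--Lax symbol is unimodular, so no truncation is needed.

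The one point where your sketch does not go through as written is the case where $U$ is not cyclic, and your closing remark that the vector-valued case ``adapts verbatim'' papers over it. In general $U=U_{\mathbb T}\oplus V$ on $L^2\oplus\mathcal K$ with $X\mathcal M=H^2\subset L^2$, but $Xx_k$ has a nonzero component in $\mathcal K$; the quantity $|Xh+Xx_k|$ is then not a scalar function, and the pointwise inequalities must be applied to $Xh+P_{L^2}Xx_k$, using that $L^2$ reduces $U$ so that $\|p(U)X(h+x_k)\|\ge\|p(U_{\mathbb T})(Xh+P_{L^2}Xx_k)\|$ (the $\mathcal K$-component can only help the lower bound). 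More seriously, your assertion that ``the spectral measure of $X(h+x_k)$ under $U$ is $|X(h+x_k)|^2\,dm$'' is false whenever $V\neq\mathbb O$, so \eqref{congss} cannot be applied directly to conclude $U|_{X\mathcal E_T(h+x_k)}\cong S$. The paper closes this step by noting that $\operatorname{clos}P_{L^2}X\mathcal E_T(h+x_k)=\mathcal E_{U_{\mathbb T}}\bigl(Xh+P_{L^2}Xx_k\bigr)$, on which $U_{\mathbb T}$ restricts to a copy of $S$ by the log-integrability you established, whence $U|_{X\mathcal E_T(h+x_k)}\buildrel d \over\prec S$; since $U|_{X\mathcal E_T(h+x_k)}$ is a cyclic a.c.\ isometry, it must be unitarily equivalent to $S$. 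That short additional argument is needed to finish your proof in the general case.
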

\begin{proof} Set $\mathcal K_0=\vee_{j=1}^\infty U^{-j}X\mathcal M$. Without loss of generality we can assume that 
$\mathcal K_0=L^2$, $U|_{\mathcal K_0}= U_{\mathbb T}$, and $X\mathcal M=H^2$. 
Let $V\in\mathcal L(\mathcal K)$ be an a.c.  unitary operator such that $U=U_{\mathbb T}\oplus V$. 
(Note that $V$ can be the  zero operator acting on  the zero space.)

 $T$ and $X$ have the following forms with respect to the decompositions 
$\mathcal H=\mathcal M\oplus \mathcal M^\perp $ and $L^2\oplus\mathcal K$:
$$T=\begin{pmatrix} T|_{\mathcal M} & A \\ \mathbb O & T_0 \end{pmatrix}
\ \ \text{ and } \ \ 
X=\begin{pmatrix} X|_{\mathcal M} & B \\ \mathbb O & X_0 \end{pmatrix}, $$
where $T_0$, $A$, $X_0$, $B$ are appropriate operators and transformations. 

Let $h$, $h_0\in\mathcal M$ and  $y\in\mathcal M^\perp$. Set $f=Xh$, $f_0=Xh_0$ and  $g=By$.  Then 
$f$, $f_0\in H^2$ and $g\in L^2$. Let $w\in L^1(\mathbb T,m)$ be the function from Lemma \ref{lembt} 
applied to $T$ and $y$. We have 
\begin{align*} \|Xp(T)\bigl((h+h_0)\oplus y\bigr)\|^2 &\geq\int_{\mathbb T}|p|^2|f+f_0+g|^2\text{\rm d}m  \\ 
\text{ and }\ \|p(T)\bigl((h+h_0)\oplus y\bigr)\|^2 & \leq 2\Bigl(C^2  \int_{\mathbb T}|p|^2|f+f_0|^2\text{\rm d}m + 
\int_{\mathbb T}|p|^2 w\text{\rm d}m \Bigr)
\end{align*} 
 for every polynomial $p$. 
 
Take $\varepsilon>0$.  If 
\begin{equation}\label{fnew}|f|\geq |f_0|+\max\Bigl(w^{1/2},\varepsilon, \frac {1+c}{c}|g|\Bigr)\ \ \text{ a.e. on  }\mathbb T, \end{equation}
then  \begin{equation}\label{lognew} \log|f+f_0+g|\in L^1(\mathbb T,m)\end{equation}
and $$\frac{1}{(1+c)\sqrt{2(C^2+1)}}\|p(T)\bigl((h+h_0)\oplus y\bigr)\| \leq \|Xp(T)\bigl((h+h_0)\oplus y\bigr)\| $$
 for every polynomial $p$. 
Set $\mathcal E=\mathcal E_T\bigl((h+h_0)\oplus y\bigr)$. We have $X\mathcal E\in\operatorname{Lat}U$, 
 and 
$$\operatorname{clos}P_{L^2}X\mathcal E=\mathcal E_{U_{\mathbb T}}\bigl(P_{L^2}X\bigl((h+h_0)\oplus y\bigr)\bigr)=
\mathcal E_{U_{\mathbb T}}(f+f_0+g).$$
By \eqref{lognew}, $U_{\mathbb T}|_{\operatorname{clos}P_{L^2}X\mathcal E}\cong S$. Thus, 
 $U|_{X\mathcal E}\buildrel d \over\prec S$. Since $U|_{X\mathcal E}$ is a cyclic a.c. isometry, 
we conclude that $U|_{X\mathcal E}\cong S$. 

For $1\leq k\leq n$ set $h_k=P_{\mathcal M}x_k$, $y_k=P_{\mathcal M^\perp}x_k$, $f_k=Xh_k$, $g_k=By_k$, and denote by $w_k$ 
 the function from Lemma \ref{lembt} applied to $T$ and $y_k$. There exists $f\in H^2$ which satisfies  \eqref{fnew} 
with $f_k$, $w_k$, $g_k$ for every $1\leq k\leq n$. There exists $h\in\mathcal M$ such that $Xh=f$.  
It is easy to see that $h$ satisfies  the conclusion of the proposition.  
\end{proof}

\begin{theorem}\label{thmnew2} Suppose that  $T\in\mathcal L(\mathcal H)$ is an a.c. polynomially bounded operator, 
 $T\buildrel d \over\prec U_{\mathbb T}$, and $2\leq \mu_T< \infty$.
Then for every $c>0$ there exist $\{\mathcal M_k\}_{k=1}^{\mu_T}\subset\operatorname{Lat}T$ 
and $Y_k\in\mathcal L(H^2,\mathcal M_k)$ such that $Y_kS=T|_{\mathcal M_k}Y_k$, 
$\|Y_k\|\|Y_k^{-1}\|\leq (1+c)\sqrt{2(C_{{\rm sim} S,T}^2+1)}C_{{\rm pol},T}$  for all $1\leq k \leq\mu_T$,  
and $$\bigvee_{k=1}^{\mu_T}\mathcal M_k=\mathcal H.$$ 
\end{theorem}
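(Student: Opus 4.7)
The plan is to reduce the $(\mu_T+1)$-family of shift-type subspaces produced by Theorem~\ref{thmnew1} down to a $\mu_T$-family by applying Proposition~\ref{propnew1} once, with a unitary-asymptote intertwiner and $\mu_T$ generators of $\mathcal H$; the price is the extra factor $\sqrt{2(C_{{\rm sim} S,T}^2+1)}\,C_{{\rm pol},T}/C_{{\rm sim} S,T}$ in the similarity bound.

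First I would fix auxiliary $c_1,c_2>0$ with $(1+c_1)(1+c_2)\le 1+c$ and apply Theorem~\ref{thmnew1} with parameter $c_1$ to obtain a shift-type $\mathcal M_0\in\operatorname{Lat}T$ with an invertible $Y_0\colon H^2\to\mathcal M_0$ intertwining $S$ with $T|_{\mathcal M_0}$ and satisfying $\|Y_0\|\|Y_0^{-1}\|\le C:=(1+c_1)C_{{\rm sim} S,T}$. Next, fix a unitary asymptote $(X,U)$ of $T$; then $\|X\|\le C_{{\rm pol},T}$, $XT=UX$, and because $T|_{\mathcal M_0}\approx S$ is a $C_{1\cdot}$-isometry, the composition $XY_0\colon H^2\to X\mathcal M_0$ is an invertible intertwining, so $\|Xh\|\ge\|h\|/C$ for every $h\in\mathcal M_0$ and $U|_{X\mathcal M_0}\cong S$. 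This verifies all hypotheses of Proposition~\ref{propnew1} with $\mathcal M=\mathcal M_0$.

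I would then take a minimal generating set $\{x_k\}_{k=1}^{\mu_T}\subset\mathcal H$ with $\mathcal E_T(x_1,\ldots,x_{\mu_T})=\mathcal H$ and apply Proposition~\ref{propnew1} with parameter $c_2$ to these $\mu_T$ vectors; this produces $h\in\mathcal M_0$ such that each cyclic subspace $\mathcal M_k:=\mathcal E_T(h+x_k)$ satisfies $U|_{X\mathcal M_k}\cong S$ and $\|Xy\|\ge \|y\|/((1+c_2)\sqrt{2(C^2+1)})$ for every $y\in\mathcal M_k$. Composing the bounded-below intertwining $X|_{\mathcal M_k}$ with the unitary identification $X\mathcal M_k\cong H^2$ yields invertible $Y_k\colon H^2\to\mathcal M_k$ with $Y_kS=T|_{\mathcal M_k}Y_k$ and
\[
\|Y_k\|\|Y_k^{-1}\|\le (1+c_2)\sqrt{2(C^2+1)}\,C_{{\rm pol},T}\le (1+c)\sqrt{2(C_{{\rm sim} S,T}^2+1)}\,C_{{\rm pol},T},
\]
where the last inequality uses $(1+c_1)(1+c_2)\le 1+c$ together with the estimate $(1+c_1)^2 C_{{\rm sim} S,T}^2 + 1\le (1+c_1)^2(C_{{\rm sim} S,T}^2+1)$.

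The decisive step is verifying $\bigvee_{k=1}^{\mu_T}\mathcal M_k=\mathcal H$. The span contains all differences $(h+x_k)-(h+x_j)=x_k-x_j$ and also $h+x_1$, hence contains $\mathcal E_T(h+x_1,\{x_k-x_1\}_{k\ge 2})$, which equals $\mathcal H=\mathcal E_T(x_1,\{x_k-x_1\}_{k\ge 2})$ provided $h$ (equivalently $x_1$) can be recovered from it. I expect this to be obtained from Lemma~\ref{lemcyclic}, applied after a preliminary adjustment of the generating set by Lemma~\ref{lemmult}: replacing $x_1$ by a suitable $H^\infty$-combination $x_1+\sum_{k\ge 2}\varphi_k(T)x_k$ puts the generators into the form required by the lemma (with $\mathcal M=\mathcal M_0$, $\mathcal N:=\mathcal M_0\vee\mathcal E_T(x_1)$ cyclic with cyclic vector $h_1\oplus u_1$, and $x_k=h_k\oplus u_k\oplus v_k$ for $k\ge 2$); then taking $h_0=h+h_1$ in the conclusion of Lemma~\ref{lemcyclic} identifies our generators (modulo elements already in the span) with the lemma's output family, giving $\bigvee_k\mathcal M_k=\mathcal H$. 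The main obstacle is precisely this combinatorial spanning step: naively the $\mu_T$ subspaces $\mathcal M_k$ span only $\mathcal H$ modulo $\mathcal M_0$, and to eliminate $\mathcal M_0$ from the spanning family one must combine Lemma~\ref{lemmult} (to prepare the generators) with the identity supplied by Lemma~\ref{lemcyclic} (to absorb $\mathcal M_0$).
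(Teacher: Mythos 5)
The decisive spanning step fails as you have set it up, and the failure comes from the order in which you choose $\mathcal M_0$ and the subspace to which Lemma \ref{lemcyclic} is applied. Lemma \ref{lemcyclic} needs hypothesis \eqref{cyclicnn}: the subspace $\mathcal N$ containing $\mathcal M$ must be generated by the single vector $h_1\oplus u_1\oplus 0$, i.e.\ $\mathcal N=\mathcal E_T(x_0)$ with $\mathcal M\subset\mathcal E_T(x_0)$. In your scheme $\mathcal M_0$ is produced first, by \cite{gam19} applied to $T$ on all of $\mathcal H$, and only afterwards do you form $\mathcal N=\mathcal M_0\vee\mathcal E_T(x_1)$; this $\mathcal N$ has no reason to be cyclic, let alone cyclic with the (adjusted) $x_1$ as cyclic vector, so \eqref{cyclicnn} is simply not available. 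This is exactly why the paper proceeds in the opposite order: it first builds a single vector $x_0=x_1+\sum_{k\ge2}\varphi_k(T)x_k$ whose cyclic subspace $\mathcal N=\mathcal E_T(x_0)$ satisfies $T|_{\mathcal N}\buildrel d\over\prec U_{\mathbb T}$ --- and here Lemma \ref{lemsuppn}, which you never invoke, is indispensable, since the $\varphi_k$ are chosen precisely so that $\operatorname{ess}\operatorname{supp}P_{L^2}Xx_0=\mathbb T$ --- and only then applies \cite{gam19} to $T|_{\mathcal N}$ to obtain $\mathcal M\subset\mathcal N$. With that containment, \eqref{cyclicnn} holds by construction.

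A second, related defect: you apply Proposition \ref{propnew1} to the generators $x_k$ themselves and take $\mathcal M_k=\mathcal E_T(h+x_k)$. Then $\bigvee_k\mathcal M_k\supset\mathcal E_T(h+x_1,\{x_k-x_1\}_{k\ge2})$, and recovering $x_1$ (equivalently $h\in\mathcal M_0$) from this family is precisely what cannot be done in general: perturbing one generator by an element of an unrelated invariant subspace can strictly decrease the span. The paper's fix is not only Lemma \ref{lemcyclic} but the specific choice of input vectors for Proposition \ref{propnew1}, namely $0\oplus u_1\oplus v_2$ and $(-h_1)\oplus0\oplus v_k$, so that the resulting generators $h_0\oplus u_1\oplus v_2$ and $(h_0-h_1)\oplus0\oplus v_k$ have the property that the difference of the first two equals $h_1\oplus u_1\oplus0=x_0$, which generates all of $\mathcal N$ and hence absorbs $\mathcal M$. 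Your generators $h+x_k=(h+h_k)\oplus u_k\oplus v_k$ do not have this structure, and reducing them to the lemma's output family ``modulo elements already in the span'' presupposes that $\mathcal N$ lies in the span --- which is the thing to be proved. The norm bookkeeping in your proposal is fine; the argument itself must be restructured along the paper's lines (Lemma \ref{lemmult} and Lemma \ref{lemsuppn} to build a cyclic $\mathcal N$ with $T|_{\mathcal N}\buildrel d\over\prec U_{\mathbb T}$, then $\mathcal M\subset\mathcal N$ from \cite{gam19}, then Proposition \ref{propnew1} applied to the adjusted vectors).
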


\begin{proof} By {\cite[Theorem 2]{ker89}}, there exists a unitary operator $V$ such that 
$T^{(a)}\cong U_{\mathbb T}\oplus V$. Without loss of generality we can assume that $T^{(a)}= U_{\mathbb T}\oplus V$. 
 Denote by  $X$  the canonical intertwining mapping for $T$ and $T^{(a)}$ constructed using a Banach limit.
 Set $n=\mu_T$.  
Let $x_1, \ldots, x_n\in\mathcal H$ be such that $\mathcal E_T(x_1, \ldots, x_n) =\mathcal H$. 
Set $g_{0k}=P_{L^2}X x_k$, $k=1, \ldots, n$. We have 
$\cup_{k=1}^n\operatorname{ess}  \operatorname{supp} g_{0k} =\mathbb T$. 
Let $\varphi_2,\ldots,\varphi_n\in H^\infty$ be functions from Lemma \ref{lemsuppn} applied to 
$g_{01}, \ldots, g_{0n}$. 
Set $$x_0=x_1+\sum_{k=2}^n \varphi_k(T)x_k\ \ \text{ and }\ \ \mathcal N=\bigvee_{j=0}^\infty T^j x_0=\mathcal E_T(x_0).$$ 
Clearly, $P_{L^2}Xx_0 = g_{01}+\sum_{k=2}^n \varphi_k g_{0k}$. 
By Lemma \ref{lemsuppn}, $\operatorname{ess}  \operatorname{supp} P_{L^2}Xx_0=\mathbb T$. 
Therefore, $T|_{\mathcal N}\buildrel d\over\prec U_{\mathbb T}$.

Take $0<c_1<c$.  By \cite{gam19}, there exist $\mathcal M\in\operatorname{Lat}T$  and 
$W\in\mathcal L(H^2,\mathcal M)$     such that $\mathcal M\subset\mathcal N$,  $WS=T|_{\mathcal M}W$, and 
$\|W\|\|W^{-1}\|\leq (1+c_1)C_{{\rm sim} S,T}$. We have
$$\|(T|_{\mathcal M})^jh\|=\|WS^jW^{-1}h\|\geq\frac{\|h\|}{\|W\|\|W^{-1}\|}\geq
\frac{\|h\|}{ (1+c_1)C_{{\rm sim}S,T}}$$
for every $h\in\mathcal M$.  By the construction of $X$ (see \cite{ker89} or \cite{ker16}), 
$\|X\|\leq C_{{\rm pol},T}$ and $\|Xx\|\geq\lim\inf_j\|T^jx\|$ for every $x\in\mathcal H$. Therefore, 
$$\|Xh\|\geq\frac{\|h\|}{ (1+c_1)C_{{\rm sim}S,T}} \text{ for every } h\in \mathcal M.$$ 

By Lemma \ref{lemmult}, $\mathcal E_T(x_0, x_2, \ldots, x_n)=\mathcal H$. 
We have $x_0 = h_1\oplus u_1\oplus 0$ and $x_k=h_k\oplus u_k\oplus v_k$ for $2\leq k\leq n$ with respect to the decomposition 
$$\mathcal H =\mathcal M\oplus (\mathcal N\ominus\mathcal M) \oplus \mathcal N^\perp.$$ 
Let $h_0$ be from Proposition \ref{propnew1} applied to $0\oplus u_1\oplus v_2$ and $(-h_1)\oplus 0\oplus v_k$, $k=2, \ldots, n$, with $C= (1+c_1)C_{{\rm sim}S,T}$ and sufficiently small $c_2>0$. 
Set $$\mathcal M_1 = \mathcal E_T(h_0\oplus u_1\oplus v_2) \  \ \text{ and } \ \  
\mathcal M_k = \mathcal E_T\bigl(( h_0-h_1)\oplus 0\oplus v_k\bigr), \ \ \ k=2, \ldots, n. $$ 
Taking into account Lemma \ref{lemcyclic}, we obtain that  $\{\mathcal M_k\}_{k=1}^n$ satisfy the conclusion of the theorem.
\end{proof}

\begin{remark}Let in assumptions of Theorem \ref{thmnew2} $T$ be a contraction. Applying 
 \cite{ker07} or {\cite[Theorem IX.3.6]{sfbk}} instead of  \cite{gam19}, one can obtain the conclusion of Theorem  \ref{thmnew2} with $2$ instead of $\sqrt{2(C_{{\rm sim} S,T}^2+1)}C_{{\rm pol},T}$.    
\end{remark}

\section{Examples}

It is well known that if $T$ is a contraction and  $T^{(a)}\cong U_{\mathbb T}$, then the multiplicity $\mu_T$ can be arbitrary. Here we recall simplest examples in which the computation of $\mu_T$ is based on \eqref{muorth} and \eqref{muker}.

The following lemma is a generalization of {\cite[Corollary 6]{ker16}}. We sketch the proof. 

\begin{lemma}\label{lemoplus}
Let  $\{T_k\}_{k=1}^N$ be a family of power bounded operators, 
and let $$ \sup_k \sup_{j\geq 0}\|T_k^j\|<\infty.$$
Then $\bigl(\oplus_{k=1}^N T_k\bigr)^{(a)}\cong\oplus_{k=1}^N T_k^{(a)}$. Here $N\in\mathbb N$ or $N=\infty$.
\end{lemma}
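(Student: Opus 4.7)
The plan is to run the Banach-limit construction of the isometric asymptote directly on $T=\oplus_{k=1}^N T_k$, show that it decomposes orthogonally over the summands, and then pass to the minimal unitary extension. Write $\mathcal H=\oplus_{k=1}^N \mathcal H_k$ and $M=\sup_k\sup_{j\geq0}\|T_k^j\|$, so that $\|T^j\|\leq M$ and $T$ is power bounded. Fix a single Banach limit $\operatorname{LIM}$ and use it consistently for each $T_k$ and for $T$. Recall that the isometric asymptote is obtained from the positive semidefinite form $[x,y]=\operatorname{LIM}_j\langle T^jx,T^jy\rangle$ by quotienting out its kernel and completing, with $T$ descending to an isometry $T_+^{(a)}$ on the resulting space $\mathcal H_+$ and $X_+$ being the canonical quotient-and-inclusion.

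The key step I would prove is the orthogonal splitting
$$[x,y]=\sum_{k=1}^N[x_k,y_k]_k,\qquad x=(x_k),\ y=(y_k)\in\mathcal H,$$
where $[\cdot,\cdot]_k$ is the analogous form built from $T_k$. For finite $N$ this is immediate from the additivity of $\operatorname{LIM}$. For $N=\infty$ the point is to interchange $\operatorname{LIM}_j$ with the infinite sum: the uniform bound $|\langle T_k^jx_k,T_k^jy_k\rangle|\leq M^2\|x_k\|\|y_k\|$ combined with $\sum_k\|x_k\|\|y_k\|<\infty$ shows that the partial sums $\sum_{k\leq K}\langle T_k^jx_k,T_k^jy_k\rangle$ approximate $\langle T^jx,T^jy\rangle$ uniformly in $j$, and a standard $\varepsilon$-argument then permits moving $\operatorname{LIM}_j$ inside the sum. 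From the splitting it follows that the kernel of $[\cdot,\cdot]$ is the coordinatewise sum of the individual kernels, so $\mathcal H_+\cong\oplus_k(\mathcal H_k)_+$ as Hilbert spaces and correspondingly $T_+^{(a)}\cong\oplus_k T_{k,+}^{(a)}$.

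Finally I would lift this from isometric to unitary asymptotes. Let $V_k$ denote the minimal unitary extension of $T_{k,+}^{(a)}$ acting on $\mathcal K_k\supset(\mathcal H_k)_+$; then $\oplus_k V_k$ on $\oplus_k\mathcal K_k$ is a unitary extension of $\oplus_k T_{k,+}^{(a)}$. Minimality follows because a vector supported in a single coordinate $k_0$ has its backward orbit under $\oplus_k V_k$ entirely in that coordinate, so
$$\bigvee_{j\geq 0}\Bigl(\oplus_k V_k\Bigr)^{-j}\oplus_k(\mathcal H_k)_+\ =\ \oplus_k\bigvee_{j\geq 0}V_k^{-j}(\mathcal H_k)_+\ =\ \oplus_k\mathcal K_k$$
by minimality of each $V_k$. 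Hence $T^{(a)}\cong\oplus_k T_k^{(a)}$, which is the claim.

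The main (and really only) obstacle is the $N=\infty$ interchange of $\operatorname{LIM}_j$ with an infinite sum in the splitting step; the hypothesis $\sup_k\sup_j\|T_k^j\|<\infty$ is exactly what is needed to make the tail of the sum small uniformly in $j$, and without it the argument would break down (which is presumably why \cite{ker16} formulates the finite case).
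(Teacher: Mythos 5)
Your proof is correct and rests on the same key point as the paper's: interchanging the Banach limit with the infinite sum, justified by the uniform power bound making the tail small uniformly in $j$. The paper packages this slightly differently --- it forms $X=\oplus_k X_k$ from the canonical intertwining maps, verifies $\|Xx\|^2=\operatorname{Lim}\bigl(\{\|(\oplus_k T_k)^jx\|^2\}_j\bigr)\geq\liminf_j\|(\oplus_k T_k)^jx\|^2$, and invokes the characterization of the asymptote in \cite{ker16} rather than redoing the quotient construction and the minimal unitary extension by hand --- but the substance is the same.
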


\begin{proof} Let $\mathop{\rm Lim}$ be a Banach limit, and let $X_k$ be the canonical intertwining mappings for 
$T_k$ and $ T_k^{(a)}$ constructed using
 $\mathop{\rm Lim}$ for every $k$.  Set $X=\oplus_{k=1}^NX_k$. Denote by $\mathcal H_k$ the spaces in which $T_k$ act.  Let $x\in \oplus_{k=1}^N \mathcal H_k$. Then $x= \oplus_{k=1}^Nx_k$, where 
$x_k \in\mathcal H_k$. We have 
$$\{\|(\oplus_{k=1}^N T_k)^j x\|^2 \}_{j\geq 0}= \sum_{k=1}^N\{\|T_k^jx_k\|^2\}_{j\geq 0} \text{ in }\ell^\infty,$$
and if $N=\infty$, then the convergence in the above equality is in $\|\cdot\|_{\ell^\infty}$. Therefore, 
\begin{align*}\|Xx\|^2&=\sum_{k=1}^N\|X_kx_k\|^2=
\sum_{k=1}^N\mathop{\rm Lim}\bigl(\{\|T_k^jx_k\|^2\}_{j\geq 0}\bigr)
\\&= \mathop{\rm Lim}\bigl(\{\|(\oplus_{k=1}^N T_k)^j x\|^2\}_{j\geq 0}\bigr)
\geq \mathop{\rm lim\, inf}_j\|(\oplus_{k=1}^N T_k)^j x\|^2.
\end{align*}
The lemma follows from  the latter estimate and {\cite[Proposition 1]{ker16}}. 
\end{proof}

\begin{example}\label{exa0} Let $T_0$ and $T_1$ be power bounded operators such that 
$T_0$ is  of class  $C_{0\cdot}$, and   $(T_1)^{(a)}\cong U_{\mathbb T}$.  
Set $T=T_0\oplus T_1$. By Lemma \ref{lemoplus}, $T^{(a)}\cong U_{\mathbb T}$. By  \eqref{muorth}, $\mu_T\geq\mu_{T_0}$. \end{example}

\begin{example}\label{exa1}  

 Let $\nu$ be a finite positive  Borel measure on $\operatorname{clos}\mathbb D$. The space $P^2(\nu)$ and the operator $S_\nu$ are defined in Introduction.   Clearly, $S_\nu$ is a contraction.  
Let $J_\nu\colon P^2(\nu)\to L^2(\nu|_{\mathbb T})$ act by the formula $J_\nu f= f|_{\mathbb T}$, 
$f\in P^2(\nu)$. Denote by $V_{\nu|_{\mathbb T}}$  the operator  of multiplication by 
the independent variable  on $L^2(\nu|_{\mathbb T})$.  
It follows directly from the construction of the unitary asymptote given in \cite{ker89} that 
\begin{equation}\label{nuasymp}(J_\nu, V_{\nu|_{\mathbb T}}) \text{  is the unitary asymptote of }  S_\nu. \end{equation}  

Recall that $m$ is the normalized Lebesgue measure on $\mathbb T$.   
Let $m_2$ be the normalized area measure on $\mathbb D$. Denote by $  \text{\bf 1}$ the unit constant function on 
$\operatorname{clos}\mathbb D$, that is, $  \text{\bf 1}(z)=1$ ($z\in\operatorname{clos}\mathbb D$). 
Let $\mathcal I\subset\mathbb T$ be an open arc. 
Set $$\nu_{\mathcal I}=m_2+m|_{\mathcal I}.$$ Let $f\in P^2(\nu_{\mathcal I})$. Then $f$ is analytic in $\mathbb D$, 
and $f$ has nontangential boundary values a.e. on $\mathcal I$ with respect to $m$, which coincide with 
$f|_{\mathcal I}$ (see, for example, \cite{milsmith}, \cite{milsmithyang} and references therein). Therefore, 
\begin{equation}\label{c10}S_{\nu_{\mathcal I}}\ \text{ is of class } C_{10}.\end{equation} 
 Furthermore, since functions from  $P^2(\nu_{\mathcal I})$ are analytic in $\mathbb D$, $  \text{\bf 1}\not\in S_{\nu_{\mathcal I}}P^2(\nu_{\mathcal I})$ and $S_{\nu_{\mathcal I}}P^2(\nu_{\mathcal I})$ is closed (see, for example, 
\cite{alers} or \cite{gam09}). Thus, \begin{equation}\label{dimker}\dim\ker S_{\nu_{\mathcal I}}^\ast=1.\end{equation}  

 Let $\{\mathcal I_k\}_{k=1}^N$ be a family of open arcs of $\mathbb T$ such that $\mathcal I_k\cap\mathcal I_n=\emptyset$, if $k\neq n$, and 
\begin{equation}\label{fullmeasure} \sum_{k=1}^N m(\mathcal I_k)=1.\end{equation} Here $N\in\mathbb N$ or $N=\infty$.
Set $$T=\bigoplus_{k=1}^N S_{\nu_{\mathcal I_k}}.$$
By \eqref{c10}, $T$ is of class $C_{10}$. By Lemma \ref{lemoplus}, \eqref{nuasymp}, and \eqref{fullmeasure}, 
$T^{(a)}\cong U_{\mathbb T}$. By \eqref{dimker},  \eqref{muorth} and \eqref{muker}, 
$\mu_T=N$.   

\end{example}

\section{Similarity to $S$ and $U_{\mathbb T}$}

In Sec. 2 and  3 we consider shift-type invariant subspaces with estimates of the norms of intertwining transformations 
in terms of the polynomial bound.  
In this section we show that we cannot expect that every invariant subspace satisfying natural necessary conditions will be 
shift-type even any  prescribed estimates of the norms of intertwining transformations are not required. 

Set  $\chi(\zeta)=\zeta$ and $\text{\bf 1}(\zeta)=1$ ($\zeta\in\mathbb T$). 
Let  $\vartheta\in H^\infty$ be an inner function. Set 
$$\mathcal K_\vartheta =H^2\ominus\vartheta H^2 \ \ \ \ \text{and } \ \ \ \ 
T_\vartheta=P_{\mathcal K_\vartheta}S|_{\mathcal K_\vartheta}.$$
Then $\mathcal K_\vartheta=\vartheta \overline\chi\overline{\mathcal K_\vartheta}$, $T_\vartheta$ is an a.c. contraction, and $\vartheta(T_\vartheta)=\mathbb O$. 

\subsection{Quasiaffine transforms of $S$}

 \begin{lemma}\label{lemasymp} Suppose that $T$ is a polynomially bounded operator and $T\prec S$. 
Then $T$ is a.c. and $T_+^{(a)}\cong S$.\end{lemma}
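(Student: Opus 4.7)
The plan is to first establish absolute continuity of $T$, and then to identify the isometric asymptote $T_+^{(a)}$ with $S$.

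For absolute continuity, I would decompose $T = T_a \dotplus T_s$, where $T_s$ is similar (via an invertible $Z_0$) to a singular unitary $V$. The transformation $Y := X|_{\mathcal{H}_s} Z_0^{-1}$ intertwines $V$ with $S$. Since $V$ is invertible, iterating $YV = SY$ gives $Y = S^n Y V^{-n}$ for every $n \geq 0$, so the range of $Y$ lies in $\bigcap_n S^n H^2 = \bigcap_n \chi^n H^2 = \{0\}$. Thus $Y = 0$, $X|_{\mathcal{H}_s} = 0$, and since $X$ is injective, $\mathcal{H}_s = \{0\}$ and $T = T_a$ is a.c.

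For the asymptote, I would first verify that $T \in C_{10}$. The relation $\|XT^n x\| = \|S^n Xx\| = \|Xx\|$ combined with $\|XT^n x\| \leq \|X\|\|T^n x\|$ gives $\|T^n x\| \geq \|Xx\|/\|X\| > 0$ for $x \neq 0$, so $T \in C_{1\cdot}$. Dually, $X^*$ intertwines $S^*$ with $T^*$ and is a quasiaffinity; since $(S^*)^n f \to 0$ for every $f \in H^2$, the density of $X^* H^2$ in $\mathcal{H}$ together with the uniform boundedness of $\{T^{*n}\}$ yields $T^{*n} h \to 0$ for every $h \in \mathcal{H}$, so $T \in C_{\cdot 0}$. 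Consequently $X_+$ is a quasiaffinity, giving $T \prec T_+^{(a)}$, and by standard results on isometric asymptotes of $C_{\cdot 0}$ operators (see \cite{ker89}, \cite{ker16}) $T_+^{(a)}$ is a pure isometry, $T_+^{(a)} \cong S^{(\alpha)}$ for some cardinal $\alpha \geq 1$.

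To conclude $\alpha = 1$, I would use the universal property of the asymptote to factor $X = Z X_+$ with $Z \colon T_+^{(a)} \to H^2$ intertwining $T_+^{(a)}$ and $S$ and having dense range (inherited from $X$). Any such intertwiner $Z \colon S^{(\alpha)} \to S$ has the form $Z(f_1, \ldots, f_\alpha) = \sum_{j=1}^\alpha \varphi_j f_j$ with $\varphi_j \in H^\infty$; for $\alpha \geq 2$ its kernel contains every vector $(\varphi_2 g, -\varphi_1 g, 0, \ldots)$, $g \in H^2$, so $Z$ cannot be a quasiaffinity. The crux is then to show $Z$ is in fact injective. The bare identity $X = ZX_+$ yields only $\ker Z \cap X_+\mathcal{H} = \{0\}$, which is too weak since $X_+\mathcal{H}$ is merely dense in $\mathcal{H}_+$. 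I expect the cleanest route is to upgrade $X_+$ to a quasisimilarity by establishing $T_+^{(a)} \prec T$ for the $C_{10}$ polynomially bounded operator $T$ at hand, generalizing the classical contraction case; then $T_+^{(a)} \prec T \prec S$ by transitivity of $\prec$, $Z$ is forced to be a quasiaffinity, and the kernel-structure obstruction gives $\alpha = 1$. This quasisimilarity step is the main obstacle of the proof.
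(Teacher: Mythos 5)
Your preliminary steps are fine: the argument that $X|_{\mathcal H_s}=0$ (hence $T$ is a.c.), the verification that $T$ is of class $C_{10}$, and the conclusions $T\prec T_+^{(a)}$ and $T_+^{(a)}\buildrel d\over\prec S$ are all correct. But the proof is not complete, and the gap is exactly the one you flag yourself. Every intertwining relation you produce goes \emph{out of} $T_+^{(a)}$ into $S$, which only yields $\alpha\geq 1$; to get $\alpha\leq 1$ you need a dense-range intertwining \emph{into} $T_+^{(a)}$ from a cyclic operator, and you have none. Your proposed repair --- prove $T_+^{(a)}\prec T$ for $C_{10}$ polynomially bounded operators --- is left entirely unproved and cannot be a general fact: Example \ref{exa1} of this paper with $N=2$ is a $C_{10}$ contraction with $\mu_T=2$ whose isometric asymptote is the cyclic operator $U_{\mathbb T}$, so $T_+^{(a)}\prec T$ would force $\mu_T\leq 1$. (The same example, with a single arc, shows that your intermediate claim that a $C_{\cdot 0}$ operator has a \emph{pure} shift asymptote is also false in general: $S_{\nu_{\mathcal I}}$ is of class $C_{10}$ while its isometric asymptote is the unitary $V_{m|_{\mathcal I}}$.) So both of these steps would have to use the hypothesis $T\prec S$ in an essential way, and you give no indication how.

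The paper closes the gap with the Bercovici--Prunaru theorem \cite{bercpr}: there is a \emph{contraction} $R$ with $R\prec T$, hence $R\prec S$, and the known contraction result ({\cite[Proposition 9]{kers}} or {\cite[Lemma 2.1]{gam12}}) gives $R_+^{(a)}\cong S$. The functoriality of isometric asymptotes ({\cite[Theorem 1(a)]{ker89}}) lifts the quasiaffinity realizing $R\prec T$ to a transformation $Z_1$ with $Z_1S=T_+^{(a)}Z_1$ and $X_TY=Z_1X_R$; density of the range of $X_TY$ forces $Z_1$ to have dense range, i.e. $S\buildrel d\over\prec T_+^{(a)}$. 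This is precisely the missing upper bound $\mu_{T_+^{(a)}}\leq 1$. Combined with $T_+^{(a)}\buildrel d\over\prec S$ and the absolute continuity of $T_+^{(a)}$, a cyclic a.c. isometry admitting a dense-range intertwining onto $S$ must be unitarily equivalent to $S$. If you want to salvage your write-up, replace the unproved quasisimilarity step (and the unproved purity claim) by this appeal to \cite{bercpr}; the rest of your reasoning then goes through.
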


\begin{proof} 
By \cite{bercpr},  there exists a contraction $R$ such that $R\prec T$.  
Clearly, $R\prec S$. By {\cite[Proposition 9]{kers}} or  {\cite[Lemma 2.1]{gam12}}, $R_+^{(a)}\cong S$. 
Denote by  $X_R$ and $X_T$ the canonical intertwining mappings for $R$ and $T$, 
and by $Y$ and $W$ the quasiaffinities which realize the relations $R\prec T$ and $T\prec S$, respectively. 
By {\cite[Theorem 1(a)]{ker89}}, there exist  operators $Z_1$  and $Z_2$ such that $Z_1S=T_+^{(a)}Z_1$,  $X_T Y=Z_1 X_R$, 
$Z_2T_+^{(a)}=SZ_2$, and $W=Z_2X_T$.  Since the ranges of $X_T$ and $W$ are dense, the 
ranges of $Z_1$ and $Z_2$ are also dense. We obtain $S\buildrel d\over\prec T_+^{(a)}$ and $T_+^{(a)}\buildrel d\over\prec S$. 
Since $T\prec S$, $T$ is a.c. by \cite{mlak} or {\cite[Proposition 16]{ker16}}.  Therefore, $T_+^{(a)}$ is a.c..  
Thus, $T_+^{(a)}\cong S$. \end{proof}

 \begin{lemma}\label{leminvers} 
Suppose that $g$, $\varphi$, $\psi$, $\vartheta\in H^\infty$, $\vartheta$ is inner, and 
$1=g\varphi+\vartheta\psi$. Furthermore, suppose that $T\in\mathcal L(\mathcal H)$ is a polynomially bounded operator, $T\prec S$,
$X$ is the canonical intertwining mapping for $T$ and $S$, and 
$Y\in\mathcal L(H^2,\mathcal H)$ is  such that $XY=g(S)$. 
Set $\mathcal M=\operatorname{clos}\vartheta(T)\mathcal H$. If $T|_{\mathcal M}\approx S$, then $T\approx S$.\end{lemma}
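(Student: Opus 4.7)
The plan is to construct a bounded bijection $Q\colon\mathcal H\to H^2$ intertwining $T$ with $S$, together with an explicit bounded inverse built from $Y$ and a similarity $Z_0\colon H^2\to\mathcal M$, $Z_0 S=T|_{\mathcal M}Z_0$ (which exists by the hypothesis $T|_{\mathcal M}\approx S$). The candidate for $Q$ is dictated by $\mathcal M=\operatorname{clos}\vartheta(T)\mathcal H$: set $Q:=Z_0^{-1}\vartheta(T)$, which is well-defined on all of $\mathcal H$ because $\vartheta(T)\mathcal H\subset\mathcal M=Z_0 H^2$, is injective (since $X\vartheta(T)=\vartheta(S)X$ with $X$ injective and $\vartheta(S)$ an isometry forces $\vartheta(T)$ injective), and satisfies $QT=SQ$. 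In parallel I would unpack the hypothesis $XY=g(S)$: injectivity of $X$ (Lemma \ref{lemasymp} makes $X$ a quasiaffinity) yields $YS=TY$, hence $Yp(S)=p(T)Y$ for $p\in H^\infty$ by the $H^\infty$-calculus, as well as $YX=g(T)$, so the Bezout identity lifts to $I_{\mathcal H}=Y\varphi(S)X+\vartheta(T)\psi(T)$.

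The crux is an inner-outer analysis. Both $XZ_0$ and $Z_0^{-1}Y\vartheta(S)$ are bounded operators on $H^2$ that commute with $S$, hence equal $h(S)$ and $\mu(S)$ for some $h,\mu\in H^\infty$. Passing to the limit in $X\vartheta(T)h_n=\vartheta(S)Xh_n$ gives $X\mathcal M\subset\vartheta H^2$, so $h\in\vartheta H^\infty$ and $h=\vartheta h_o$ with $h_o\in H^\infty$; the converse density $\overline{X\mathcal M}\supset\overline{\vartheta(S)X\mathcal H}=\vartheta H^2$ then pins down $h_o$ as outer. Since $Y\vartheta(S)=\vartheta(T)Y$ has range inside $\vartheta(T)\mathcal H\subset\mathcal M$, the multiplier $\mu\in H^\infty$ is well-defined; applying $X$ to $Z_0\mu(S)=Y\vartheta(S)$ and substituting $XY=g(S)$, $XZ_0=(\vartheta h_o)(S)$ gives $h_o\mu=g$ in $H^\infty$, so $\mu=g/h_o\in H^\infty$. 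Now set
\[B:=Y(\varphi h_o)(S)+Z_0\psi(S)\in\mathcal L(H^2,\mathcal H),\]
which satisfies $BS=TB$. Using $QZ_0=\vartheta(S)$ (from $\vartheta(T)Z_0=Z_0\vartheta(S)$) and $QY=\mu(S)=(g/h_o)(S)$, one computes
\[QB=(g/h_o)(S)(\varphi h_o)(S)+\vartheta(S)\psi(S)=(g\varphi+\vartheta\psi)(S)=I_{H^2},\]
the final equality being the Bezout identity. Hence $Q$ is a bounded injection with bounded right inverse $B$, so $Q$ is bijective; the open mapping theorem produces $Q^{-1}\in\mathcal L(H^2,\mathcal H)$, and $QT=SQ$ delivers $T\approx S$.

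The main obstacle is the factorization $h=\vartheta h_o$ with $h_o$ outer and the consequent membership $g/h_o\in H^\infty$. Without this structural refinement the naive candidate $Y\varphi(S)+Z_0\psi(S)$ does not satisfy $QB=I$; the outer factor $h_o$ is exactly the algebraic correction that makes the multiplier $g/h_o$ land in $H^\infty$ and the corrective factor $\varphi h_o$ available, so that the Bezout identity collapses $QB$ to the identity on $H^2$.
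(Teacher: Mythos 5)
Your proof is correct, but it takes a genuinely different route from the paper's. The paper works with the canonical intertwining map $X$ itself: from $T|_{\mathcal M}\approx S$ and K\'erchy's Theorem~1 it deduces that $X|_{\mathcal M}$ is left invertible, hence $X\mathcal M=\operatorname{clos}X\mathcal M=\vartheta H^2$ \emph{exactly}; then for any $f\in H^2$ the Bezout identity gives $f=g(S)(\varphi f)+\psi(S)(\vartheta f)=X\bigl(Y(\varphi f)+\psi(T)x\bigr)$ with $\vartheta f=Xx$, $x\in\mathcal M$, so $X$ is onto and invertible by the Closed Graph Theorem. You instead bypass the left-invertibility of $X|_{\mathcal M}$ entirely and build a \emph{new} similarity $Q=Z_0^{-1}\vartheta(T)$ with an explicit right inverse $B=Y(\varphi h_o)(S)+Z_0\psi(S)$; the price is the multiplier analysis (identifying $XZ_0=(\vartheta h_o)(S)$ with $h_o$ outer via $\operatorname{clos}X\mathcal M=\vartheta H^2$, and $QY=(g/h_o)(S)$), which the paper does not need. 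Each step of yours checks out: injectivity of $\vartheta(T)$ and of $X$ (the latter from the factorization $W=Z_2X$ in Lemma~\ref{lemasymp}), the commutant identification $\{S\}'=\{\varphi(S):\varphi\in H^\infty\}$, the outerness of $h_o$ from $\operatorname{clos}h_oH^2=H^2$, and the collapse $QB=(g\varphi+\vartheta\psi)(S)=I_{H^2}$, which together with injectivity of $Q$ gives $BQ=I_{\mathcal H}$. What your version buys is independence from the universality property of the isometric asymptote at the invariant subspace (only the bare similarity $Z_0$ is used); what the paper's version buys is brevity --- no inner--outer factorization is required once $X\mathcal M=\vartheta H^2$ is known as an equality of sets rather than of closures.
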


\begin{proof} 
By Lemma \ref{lemasymp}, $T$ is a.c.. Thus, the operator $\vartheta(T)$ is well defined. 
Since $X\vartheta(T)=\vartheta(S)X$ and $\operatorname{clos}X\mathcal H = H^2$, we conclude that 
$\operatorname{clos}X\mathcal M = \vartheta H^2$. Suppose that $T|_{\mathcal M}\approx S$. By {\cite[Theorem 1]{ker89}}, 
$X|_{\mathcal M}$ is left invertible, in particular, $X\mathcal M=\operatorname{clos}X\mathcal M = \vartheta H^2$. 
Let $f\in H^2$. There exists $x\in\mathcal M$ such that $\vartheta f=Xx$. We have 
\begin{align*} f&=(g\varphi+\vartheta \psi)f=g(S)(\varphi f) + \psi(S)(\vartheta f)= (XY)(\varphi f) +\psi(S)Xx\\ 
&=X\bigl(Y(\varphi f )+ \psi(T)x\bigr).\end{align*}
We conclude that $X \mathcal H = H^2$. Since $\ker X=\{0\}$, $X$ is invertible by the Closed Graph Theorem. \end{proof}

 \begin{corollary}\label{corinvers} Suppose that $T$ is a polynomially bounded operator and $T\prec S$. 
If  $T|_{\mathcal M}\approx S$ for every $\mathcal M\in\operatorname{Lat}T$ such that $\dim \mathcal M^\perp=\infty$, 
then $T\approx S$.\end{corollary}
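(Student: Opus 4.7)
The plan is to apply Lemma \ref{leminvers} with two carefully chosen coprime inner functions, $\vartheta_0$ playing the role of $g$ and $\vartheta_1$ playing the role of $\vartheta$. Denote by $X\colon\mathcal H\to H^2$ the canonical intertwining mapping for $T$ and $S$; this is well defined since by Lemma \ref{lemasymp} $T$ is a.c.\ and $T_+^{(a)}\cong S$. For any inner $\vartheta$, taking adjoints in $X\vartheta(T)=\vartheta(S)X$ gives $\vartheta(T)^*X^*=X^*\vartheta(S)^*$, so $X^*(\mathcal K_\vartheta)\subset\ker\vartheta(T)^*=(\operatorname{clos}\vartheta(T)\mathcal H)^\perp$; since $X^*$ is injective (as $X$ has dense range), this yields $\dim(\operatorname{clos}\vartheta(T)\mathcal H)^\perp\geq\dim\mathcal K_\vartheta$, which is infinite as soon as $\vartheta$ is not a finite Blaschke product. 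I then pick inner $\vartheta_0,\vartheta_1$ with $\dim\mathcal K_{\vartheta_i}=\infty$ satisfying the Corona condition $\inf_{z\in\mathbb D}(|\vartheta_0(z)|+|\vartheta_1(z)|)>0$---for instance, the singular inner functions with unit point masses at $+1$ and $-1$---and use the Corona theorem to obtain $\varphi,\psi\in H^\infty$ with $1=\vartheta_0\varphi+\vartheta_1\psi$. Setting $\mathcal M_i=\operatorname{clos}\vartheta_i(T)\mathcal H$, the hypothesis of the corollary gives $T|_{\mathcal M_i}\approx S$ for $i=0,1$.

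The technical heart of the argument is the construction of $Y\in\mathcal L(H^2,\mathcal H)$ with $XY=\vartheta_0(S)$. Arguing exactly as in the proof of Lemma \ref{leminvers}, $\operatorname{clos}X\mathcal M_0=\vartheta_0 H^2$. Since $T|_{\mathcal M_0}\approx S$ is similar to the isometry $S$, {\cite[Theorem 1]{ker89}} shows that $X|_{\mathcal M_0}$ is left invertible, so its range is closed and hence equal to $\vartheta_0 H^2$. Therefore $X|_{\mathcal M_0}\colon\mathcal M_0\to\vartheta_0 H^2$ is a bounded bijection, and is invertible by the open mapping theorem. I then set $Yf=(X|_{\mathcal M_0})^{-1}(\vartheta_0 f)$, giving $Y\in\mathcal L(H^2,\mathcal H)$ with $XYf=\vartheta_0 f=\vartheta_0(S)f$.

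Finally, Lemma \ref{leminvers} applied with $g=\vartheta_0$, $\vartheta=\vartheta_1$, $\mathcal M=\mathcal M_1$, and with the $Y,\varphi,\psi$ just constructed, yields $T\approx S$. The main obstacle is the need for two distinct inner functions: any $Y$ constructed through $\operatorname{clos}\vartheta(T)\mathcal H$ automatically forces $XY$ to take values in $\vartheta H^2$, so that $g$ becomes divisible by $\vartheta$ and the Corona relation $1=g\varphi+\vartheta\psi$ is unsolvable. Keeping $\vartheta_0$ and $\vartheta_1$ coprime disentangles these two roles and makes the whole argument go through.
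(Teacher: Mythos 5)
Your proof is correct, but it takes a genuinely different route from the paper's. The paper produces the transformation $Y$ with $XY=g(S)$ from Corollary \ref{corss} (i.e., ultimately from Bourgain's theorem): it takes an arbitrary nonzero $Y$ intertwining $S$ with $T$, so that $g$ is some unknown nonzero $H^\infty$-function, and must then manufacture an inner $\vartheta$ coprime with $g$ in the corona sense --- this is done by picking a point $\zeta\in\mathbb T$ where $g$ has a nonzero nontangential limit and taking an infinite Blaschke product with zeros tending to $\zeta$; the hypothesis of the corollary is then invoked only once, on $\mathcal M=\operatorname{clos}\vartheta(T)\mathcal H$. You instead prescribe $g=\vartheta_0$ in advance and build $Y$ by hand as $f\mapsto(X|_{\mathcal M_0})^{-1}(\vartheta_0 f)$, which costs you a second application of the hypothesis (to $\mathcal M_0$, to make $X|_{\mathcal M_0}$ invertible onto $\vartheta_0 H^2$) but buys you independence from Corollary \ref{corss} and from the boundary-value argument: the pair $(\vartheta_0,\vartheta_1)$ can be fixed once and for all, and your verification that the point-mass singular inner functions at $\pm1$ satisfy the corona condition and have infinite-dimensional model spaces is sound. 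All the individual steps --- the identity $\operatorname{clos}X\mathcal M_i=\vartheta_iH^2$, the left invertibility of $X|_{\mathcal M_0}$ via \cite{ker89} (exactly as used inside the paper's proof of Lemma \ref{leminvers}), the estimate $\dim\mathcal M_i^\perp\geq\dim\mathcal K_{\vartheta_i}=\infty$ via the injectivity of $X^\ast$, and the observation that Lemma \ref{leminvers} only needs $XY=g(S)$ and not $YS=TY$ --- check out.
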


\begin{proof} Denote by $\mathcal H$ the space in which $T$ acts, and by  $X$ the canonical intertwining mapping for $T$ and $S$. 
Note that $X$ is a quasiaffinity.
By Corollary \ref{corss}, there exists nonzero $Y\in\mathcal L(H^2,\mathcal H)$ such that $YS=TY$. Since $(XY)S=S(XY)$,  there exists $g\in H^\infty$ 
such that $XY=g(S)$. Since $\ker X=\{0\}$, we have $g\not\equiv 0$. Therefore, $g$ has nonzero nontangential boundary values 
a.e. on $\mathbb T$, in particular, in some point $\zeta\in \mathbb T$. It is possible to construct an infinite Blaschke product $ \vartheta$ with zeros tend to $\zeta$ such that 
$$\inf_{z\in\mathbb D}(|g(z)|+| \vartheta(z)|)>0.$$ By the corona theorem, 
there exist $\varphi$, $\psi\in H^\infty$ such that $1=g\varphi+ \vartheta\psi$. 
Set $\mathcal M=\operatorname{clos} \vartheta(T)\mathcal H$. Clearly,
 $\operatorname{clos}X\mathcal M = \vartheta  H^2$ and  
$\operatorname{clos}P_{ \mathcal K_\vartheta}X\mathcal M ^\perp=  \mathcal K_\vartheta$. 
Therefore, $\dim \mathcal M^\perp=\infty$. If $T|_{\mathcal M}\approx S$, then $T\approx S$ by Lemma \ref{leminvers}.
\end{proof}

\begin{remark}Suppose that $T$ is an operator (on a Hilbert space), $T$ has no eigenvalues, $\mathcal M\in\operatorname{Lat}T$, $T|_{\mathcal M}\approx S$, 
 $\dim \mathcal M^\perp<\infty$, and $\sigma(P_{\mathcal M^\perp}T|_{\mathcal M^\perp})\subset\mathbb D$. Then 
 $T\approx S$. We left the proof to the reader. \end{remark}

\subsection{Operators quasisimilar to $U_{\mathbb T}$}

The result of this subsection  appears as an answer on Vasily Vasyunin's  twenty-years old question (private communication) 
originally concerning contractions with scalar outer characteristic functions. (On  characteristic functions of contractions see \cite{sfbk}.)

\smallskip

The following simple lemma is given for convenience of references.

  \begin{lemma}\label{lemcorona} Let $g\in H^\infty$. If $\inf_{\mathbb D}|g|=0$, 
then there exists a Blaschke product $\vartheta$ such that $\inf_{\mathbb D}(|g|+|\vartheta|)=0$.
\end{lemma}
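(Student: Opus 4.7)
The plan is to construct $\vartheta$ as a Blaschke product whose zeros lie in the set where $|g|$ is nearly zero. Since $\inf_{\mathbb{D}}|g| = 0$, I would begin by selecting a sequence $\{z_n\} \subset \mathbb{D}$ with $|g(z_n)| \to 0$, and then treat two cases according to the accumulation behaviour of this sequence.

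If $\{z_n\}$ has a subsequential limit $z_0$ inside $\mathbb{D}$ itself, then continuity of $g$ forces $g(z_0)=0$, and taking $\vartheta$ to be the single Blaschke factor $(z-z_0)/(1-\overline{z_0}z)$ immediately gives $|g(z_0)|+|\vartheta(z_0)|=0$, so the infimum is zero. Otherwise, after passing to a subsequence we may assume $|z_n|\to 1$; a further thinning (for instance keeping only indices with $1-|z_{n_k}|<2^{-k}$) guarantees the Blaschke condition $\sum_k(1-|z_{n_k}|)<\infty$. Letting $\vartheta$ be the Blaschke product with zero set $\{z_{n_k}\}$, we have $\vartheta(z_{n_k})=0$ while $|g(z_{n_k})|\to 0$, hence $\inf_{\mathbb{D}}(|g|+|\vartheta|)=0$ as required.

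There is no essential obstacle here: the only ingredients are continuity of $g$ on $\mathbb{D}$, the standard Blaschke convergence criterion, and the extraction of a subsequence approaching $\mathbb{T}$ sufficiently fast. In particular, no deep result such as the corona theorem is needed — the construction is entirely elementary and the whole argument fits in a few lines.
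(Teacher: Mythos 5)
Your proof is correct. The paper states this lemma without proof (it is listed among the ``simple'' lemmas given only for convenience of reference), and your two-case argument --- a subsequential limit $z_0\in\mathbb D$ handled by the single Blaschke factor vanishing at $z_0$, and a sequence tending to $\mathbb T$ thinned so that $\sum_k(1-|z_{n_k}|)<\infty$ holds and the infinite Blaschke product with zeros $\{z_{n_k}\}$ exists --- is exactly the elementary argument the author intends.
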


Set $H^2_-=L^2\ominus H^2$ and  $S_\ast=P_{H^2_-}U_{\mathbb T}|_{H^2_-}$. 

The following lemma is actually well known.

 \begin{lemma}\label{leminvert} Suppose that $\vartheta$, $g\in H^\infty$, $\vartheta$ is inner,  and 
$\vartheta$  is coprime with the inner part of $g$. Then there exists $c>0$ such that 
\begin{equation} \label{cminus}
\|P_{H^2_-}g h\|\geq c\|h\| \ \ \text{ for every }\ h\in\overline\chi\overline{\mathcal K_\vartheta}\end{equation} 
if and only if 
\begin{equation} \label{corona}\inf_{\mathbb D}(|g|+|\vartheta|)>0.\end{equation} 
\end{lemma}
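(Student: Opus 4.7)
The plan is to reduce \eqref{cminus} to a bounded-below condition on the model operator $T_\vartheta$ and then identify that condition with \eqref{corona} via corona-type arguments. Using the identity $\mathcal K_\vartheta=\vartheta\overline\chi\overline{\mathcal K_\vartheta}$ recorded in the paper, one has $\overline\chi\overline{\mathcal K_\vartheta}=\overline\vartheta\mathcal K_\vartheta\subset H^2_-$; writing $h=\overline\vartheta k$ with $k\in\mathcal K_\vartheta$ gives $\|h\|=\|k\|$. Since $gk\in H^2$, I would decompose $gk=g(T_\vartheta)k+\vartheta f$ with $g(T_\vartheta)k=P_{\mathcal K_\vartheta}(gk)\in\mathcal K_\vartheta$ and $f\in H^2$. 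Multiplying by $\overline\vartheta$ yields $gh=\overline\vartheta g(T_\vartheta)k+f$, where $\overline\vartheta g(T_\vartheta)k\in\overline\vartheta\mathcal K_\vartheta\subset H^2_-$ and $f\in H^2$; hence $P_{H^2_-}(gh)=\overline\vartheta g(T_\vartheta)k$ and $\|P_{H^2_-}(gh)\|=\|g(T_\vartheta)k\|$. So \eqref{cminus} with constant $c$ is equivalent to $\|g(T_\vartheta)k\|\geq c\|k\|$ for every $k\in\mathcal K_\vartheta$.

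For the forward direction, assume \eqref{corona}. Carleson's corona theorem supplies $\varphi,\psi\in H^\infty$ with $g\varphi+\vartheta\psi=1$. Since $T_\vartheta$ is an a.c.\ contraction with $\vartheta(T_\vartheta)=\mathbb O$, the $H^\infty$-calculus gives $g(T_\vartheta)\varphi(T_\vartheta)=I_{\mathcal K_\vartheta}$. Coprimality of $\vartheta$ with the inner part of $g$ forces injectivity of $g(T_\vartheta)$: if $g(T_\vartheta)k=0$, then $gk\in\vartheta H^2$, whence $\vartheta\mid k$ as inner factor by coprimality, so $k\in\vartheta H^2\cap\mathcal K_\vartheta=\{0\}$. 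Therefore $g(T_\vartheta)$ is invertible with inverse $\varphi(T_\vartheta)$, and \eqref{cminus} holds with $c=\|\varphi\|_\infty^{-1}$.

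The converse is the delicate half, and rests on three classical deep facts. Assume $g(T_\vartheta)$ is bounded below; its range is then closed, and since $g(T_\vartheta)$ commutes with $T_\vartheta$, it is $T_\vartheta$-invariant, so by the Beurling-type classification of $T_\vartheta$-invariant subspaces it equals $\vartheta_1\mathcal K_{\vartheta/\vartheta_1}$ for some inner divisor $\vartheta_1$ of $\vartheta$. For every $k\in\mathcal K_\vartheta$ we then have $gk\in\vartheta_1 H^2$; coprimality of $\vartheta_1$ with the inner part of $g$ gives $\vartheta_1\mid k$, so $k\in\vartheta_1 H^2\cap\mathcal K_\vartheta=\vartheta_1\mathcal K_{\vartheta/\vartheta_1}$; consequently $\mathcal K_{\vartheta_1}=\{0\}$ and $\vartheta_1$ is constant. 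Hence $g(T_\vartheta)$ is surjective, thus invertible, and by Sarason's commutant theorem $g(T_\vartheta)^{-1}=\varphi(T_\vartheta)$ for some $\varphi\in H^\infty$. This gives $g\varphi-1\in\vartheta H^\infty$, i.e.\ $g\varphi+\vartheta\psi=1$ for some $\psi\in H^\infty$, and Carleson's corona theorem yields \eqref{corona}. The main obstacle is precisely this use of coprimality to rule out a proper closed $T_\vartheta$-invariant range, which is what upgrades ``bounded below'' to ``invertible'' and opens the door to Sarason and Carleson.
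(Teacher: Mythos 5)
Your proof is correct, and its overall architecture coincides with the paper's: both reduce \eqref{cminus} to invertibility of $g(T_\vartheta)$, pass between \eqref{corona} and a Bezout identity $g\varphi+\vartheta\psi=1$ via the corona theorem, and use the fact that an invertible $g(T_\vartheta)$ has an inverse of the form $\varphi(T_\vartheta)$ (you via Sarason's commutant theorem, the paper via {\cite[Theorem X.2.10]{sfbk}} --- the same fact). The one genuine divergence is in the reduction and in how the converse reaches invertibility. The paper writes $\|P_{H^2_-}gh\|=\|P_{H^2}\overline g\,\overline\chi\overline h\|$, so that \eqref{cminus} becomes left invertibility of $g(T_\vartheta)^\ast$, i.e.\ \emph{surjectivity} of $g(T_\vartheta)$; combined with the injectivity supplied by coprimality, invertibility is immediate and no further work is needed. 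Your parametrization $h=\overline\vartheta k$ instead turns \eqref{cminus} into ``$g(T_\vartheta)$ is bounded below,'' which gives injectivity and closed range but not surjectivity, and you must then invoke the classification of $\operatorname{Lat}T_\vartheta$ to identify the range as $\vartheta_1\mathcal K_{\vartheta/\vartheta_1}$ and use coprimality to force $\vartheta_1$ constant. That extra step is sound (the range of $g(T_\vartheta)$ is indeed closed and $T_\vartheta$-invariant, and $gk\in\vartheta_1H^2$ does force $k\in\vartheta_1H^2$), but it imports the Beurling-type lattice description, which the paper's adjoint trick avoids entirely. Two cosmetic remarks: in the forward direction the injectivity argument is redundant, since $\varphi(T_\vartheta)g(T_\vartheta)=(g\varphi)(T_\vartheta)=I$ already gives a two-sided inverse; and at the end, the implication from the Bezout identity to $\inf_{\mathbb D}(|g|+|\vartheta|)>0$ is the trivial direction of the corona theorem --- Carleson's theorem is what you (correctly) use in the other direction.
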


\begin{proof}
Clearly, $\|P_{H^2_-}gh\|=\|P_{H^2}\overline g\overline \chi \overline h\|$ for every $h\in L^2(\mathbb T)$. Therefore, 
\eqref{cminus} is equivalent to the left invertibility of $g(T_\vartheta)^\ast$. 
Since $\vartheta$ is coprime with the inner part of $g$, $\ker g(T_\vartheta)=\{0\}$. 
Thus, \eqref{cminus} is equivalent to the invertibility of $g(T_\vartheta)$. 
By the corona theorem,  \eqref{corona} is equivalent to the existence of functions $\varphi$, $\psi\in H^\infty$ such that 
\begin{equation} \label{corona1}g\varphi+\vartheta\psi=1.\end{equation} 

If $g(T_\vartheta)$ is invertible, then there exists 
$\varphi\in H^\infty$ such that $\varphi(T_\vartheta)=g(T_\vartheta)^{-1}$ 
{\cite[Theorem X.2.10]{sfbk}}. Consequently, there exists $\psi\in H^\infty$ 
such that \eqref{corona1} is fulfilled, and \eqref{corona} follows. Conversely, if  \eqref{corona1} is fulfilled for some 
$\varphi$, $\psi\in H^\infty$, then  $\varphi(T_\vartheta)=g(T_\vartheta)^{-1}$, and \eqref{cminus} follows. 
\end{proof}

The following lemma is a corollary of {\cite[Theorem 3]{ker89}}.

 \begin{lemma}\label{lemsimshift} Suppose that $T$ is a power bounded operator (on a Hilbert space), $T\sim U_{\mathbb T}$, 
$\mathcal N\in\operatorname{Lat}T$, and $T|_{\mathcal N}\approx S$. 
Then $P_{\mathcal N^\perp}T|_{\mathcal N^\perp}$ is of class $C_{01}$. 
\end{lemma}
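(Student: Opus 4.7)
The plan is to verify the two defining halves of class $C_{01}$ separately. Set $R=P_{\mathcal N^\perp}T|_{\mathcal N^\perp}$. The $C_{\cdot 1}$ half is essentially free: since $T\sim U_{\mathbb T}$, $T$ belongs to $C_{11}$, so $T^\ast$ belongs to $C_{1\cdot}$. As $\mathcal N^\perp\in\operatorname{Lat}T^\ast$ with $T^\ast|_{\mathcal N^\perp}=R^\ast$, the condition $\inf_j\|T^{\ast j}y\|>0$ ($y\neq 0$) restricts to $\mathcal N^\perp$ without loss, giving $R^\ast\in C_{1\cdot}$, i.e. $R\in C_{\cdot 1}$.

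The substantive half is $R\in C_{0\cdot}$, and {\cite[Theorem 3]{ker89}} is the key tool. I would construct the unitary asymptote $(X,T^{(a)})$ of $T$ using some Banach limit $\mathop{\rm Lim}$. The hypothesis $T\sim U_{\mathbb T}$ ensures that $X$ is a quasiaffinity and $T^{(a)}\cong U_{\mathbb T}$ (two quasisimilar unitaries being unitarily equivalent); identify $T^{(a)}$ with $U_{\mathbb T}$ acting on $L^2$. The functorial content of the cited theorem then yields that the unitary asymptote of $T|_{\mathcal N}$ is realized as $(X|_{\mathcal N},\,U_{\mathbb T}|_{\operatorname{clos}X\mathcal N})$. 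Because $T|_{\mathcal N}\approx S$, $(T|_{\mathcal N})^{(a)}\cong U_{\mathbb T}$, hence $U_{\mathbb T}|_{\operatorname{clos}X\mathcal N}\cong U_{\mathbb T}$. The classification of $U_{\mathbb T}$-invariant subspaces of $L^2$ (Beurling-type $\varphi H^2$ produce the unilateral shift, while reducing $\chi_E L^2$ yield a bilateral shift equivalent to $U_{\mathbb T}$ only for $E=\mathbb T$ a.e.) then forces $\operatorname{clos}X\mathcal N=L^2$.

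To finish, fix $y\in\mathcal N^\perp$ and pick $\{h_n\}\subset\mathcal N$ with $Xh_n\to Xy$ in $L^2$. Writing $T$ in block form with respect to $\mathcal H=\mathcal N\oplus\mathcal N^\perp$, one has $T^j y=B_jy+R^jy$ with $B_jy\in\mathcal N$, $R^jy\in\mathcal N^\perp$, and $T^jh_n\in\mathcal N$ by invariance of $\mathcal N$. Orthogonality yields
\[\|T^j(y-h_n)\|^2=\|B_jy-T^jh_n\|^2+\|R^jy\|^2\ge\|R^jy\|^2\qquad(j,n\ge 0).\]
Applying $\mathop{\rm Lim}_j$ to both sides and using $\mathop{\rm Lim}_j\|T^jg\|^2=\|Xg\|^2$ gives $\mathop{\rm Lim}_j\|R^jy\|^2\le\|X(y-h_n)\|^2\to 0$ as $n\to\infty$. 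Since the argument repeats verbatim for any Banach limit, every Banach-limit value of the bounded sequence $\{\|R^jy\|^2\}$ equals zero, and therefore $\lim_j\|R^jy\|=0$. Thus $R\in C_{0\cdot}$, and combining both halves, $R\in C_{01}$.

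The main obstacle will be the precise invocation of \cite[Theorem 3]{ker89}, namely the identification of the unitary asymptote of $T|_{\mathcal N}$ as the restriction of that of $T$ to $\operatorname{clos}X\mathcal N$, together with the auxiliary verifications that $X$ is a quasiaffinity and $T^{(a)}\cong U_{\mathbb T}$ under the hypothesis $T\sim U_{\mathbb T}$. A secondary subtlety is that $\mathop{\rm Lim}_j\|R^jy\|^2=0$ for one specific Banach limit does not a priori yield $\lim_j\|R^jy\|=0$; this is handled by observing that the whole construction transplants to an arbitrary Banach limit, combined with the standard fact that a bounded sequence of nonnegative reals whose every Banach-limit value vanishes must itself converge to zero.
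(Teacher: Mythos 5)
Your $C_{\cdot 1}$ half is correct and coincides with the paper's. The $C_{0\cdot}$ half, which is the substantive one, contains a genuine error. From $T|_{\mathcal N}\approx S$ what you actually obtain is that the \emph{isometric} asymptote of $T|_{\mathcal N}$ is unitarily equivalent to $S$; under the identification of $T^{(a)}$ with $U_{\mathbb T}$ on $L^2$, that isometric asymptote is exactly $U_{\mathbb T}|_{\operatorname{clos}X\mathcal N}$, whereas the \emph{unitary} asymptote $(T|_{\mathcal N})^{(a)}\cong U_{\mathbb T}$ is its minimal unitary extension, living on $\bigvee_{j\geq 0}U_{\mathbb T}^{-j}\operatorname{clos}X\mathcal N$ and not on $\operatorname{clos}X\mathcal N$ itself. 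So the correct conclusion is $U_{\mathbb T}|_{\operatorname{clos}X\mathcal N}\cong S$, i.e. $\operatorname{clos}X\mathcal N=\varphi H^2$ for some unimodular $\varphi$ --- a simply invariant, in general proper, subspace --- and your claim $\operatorname{clos}X\mathcal N=L^2$ fails. The simplest counterexample is $T=U_{\mathbb T}$, $\mathcal N=H^2$: then $X=I_{L^2}$ and $\operatorname{clos}X\mathcal N=H^2\neq L^2$. Consequently the final step collapses: for $0\neq y\in\mathcal N^\perp$ there need be no $h_n\in\mathcal N$ with $Xh_n\to Xy$ (in the example $Xy=y\in H^2_-$ cannot be approximated from the closed subspace $H^2$). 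Your orthogonality inequality, which is itself correct, only yields $\mathop{\rm Lim}_j\|R^jy\|^2\leq\operatorname{dist}\bigl(Xy,\operatorname{clos}X\mathcal N\bigr)^2$, and the right-hand side is strictly positive in general, so $R\in C_{0\cdot}$ does not follow.

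The paper closes this gap by invoking {\cite[Theorem 3]{ker89}} in its full strength: for $\mathcal N\in\operatorname{Lat}T$ the unitary asymptote of $T$ is the orthogonal sum of the unitary asymptotes of $T|_{\mathcal N}$ and of $R=P_{\mathcal N^\perp}T|_{\mathcal N^\perp}$. Since $T^{(a)}\cong U_{\mathbb T}$ has spectral multiplicity one and $(T|_{\mathcal N})^{(a)}\cong U_{\mathbb T}$ already exhausts it, comparison of multiplicities forces $R^{(a)}=\mathbb O$, which is equivalent to $R$ being of class $C_{0\cdot}$. (Your worry about Banach limits is resolved by power boundedness: $\|R^{j+k}y\|\leq\bigl(\sup_i\|R^i\|\bigr)\|R^jy\|$, so the vanishing of a single Banach limit of $\{\|R^jy\|^2\}_j$ already gives $\lim_j\|R^jy\|=0$.) If you want a self-contained argument, you must work with the full orthogonal decomposition of the asymptote space of $T$, not merely with the invariant subspace $\operatorname{clos}X\mathcal N$.
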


\begin{proof} Since $T$ is of class $C_{11}$, $P_{\mathcal M^\perp}T|_{\mathcal M^\perp}$ is of class $C_{\cdot 1}$ 
for every $\mathcal M\in\operatorname{Lat}T$. Since $T|_{\mathcal N}\approx S$, 
 $(T|_{\mathcal N})^{(a)}\cong U_{\mathbb T}$, see  \cite{ker89}.
 Since $T^{(a)}\cong U_{\mathbb T}$, 
by {\cite[Theorem 3]{ker89}},  
$(P_{\mathcal N^\perp}T|_{\mathcal N^\perp})^{(a)} = \mathbb O$. 
It means that $P_{\mathcal N^\perp}T|_{\mathcal N^\perp}$ is of class $C_{0\cdot}$.
\end{proof}

\begin{proposition}\label{proprgsim} 
For $g\in H^\infty$ set 
\begin{equation}\label{rg} R_g=\begin{pmatrix} S & (\cdot,\overline\chi\overline g)\text{\rm \bf{1}} 
\\ \mathbb O & S_\ast \end{pmatrix}.\end{equation}
Then $R_g$ is power bounded, and $R_g\sim U_{\mathbb T}$ if and only if $g$ is outer. Moreover, the following are equivalent:
\begin{enumerate}[\upshape (i)]
\item  $R_g\approx U_{\mathbb T}$;
\item $1/g\in H^\infty$;
\item $R_g\sim U_{\mathbb T}$  and if $\mathcal M\in\operatorname{Lat}R_g$ is such that $R_g|_{\mathcal M}$ is of class $C_{10}$, then 
 $R_g|_{\mathcal M}\approx S$.\end{enumerate}\end{proposition}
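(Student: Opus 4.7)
My strategy is to identify $H^2\oplus H^2_-$ with $L^2$ and work throughout with two canonical intertwiners between $R_g$ and $U_{\mathbb T}$ acting on $L^2$, namely
\[
\Phi F=P_{H^2}(gF)+P_{H^2_-}F,\qquad \Psi F=P_{H^2}F+P_{H^2_-}(g\,P_{H^2_-}F),
\]
for which a direct computation (using $P_{H^2_-}(g)=0$) yields $\Phi U_{\mathbb T}=R_g\Phi$ and $\Psi R_g=U_{\mathbb T}\Psi$. Power boundedness will be obtained by expanding $R_g^n$ as a block-upper-triangular matrix whose $(1,2)$-entry $A_n=\sum_{k=0}^{n-1}S^{n-1-k}C_g S_\ast^k$ sends $h\in H^2_-$ to a polynomial in $\chi$ with coefficients $(S_\ast^k h,\overline\chi\overline g)$; under the unitary identification $V\colon H^2_-\to H^2$, $\overline\chi^m\mapsto\chi^{m-1}$ (which intertwines $S_\ast$ with $S^*$), these coefficients become the first $n$ Fourier coefficients of a single $L^2$-function of norm at most $\|h\|\|g\|_\infty$, so $\|A_n\|$ is bounded uniformly in $n$.

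For $R_g\sim U_{\mathbb T}\iff g$ outer, the sufficiency reduces to showing that $\Phi$ and $\Psi$ are quasiaffinities when $g$ is outer. For $\Phi$ this is the standard fact that $gH^2$ is dense in $H^2$ iff $g$ is outer. For $\Psi$ the content is $\ker\Psi=\{h\in H^2_-:gh\in H^2\}=\{0\}$, which I would establish as follows: given such $h$, set $\phi=gh\in H^2$ and write $h=\overline G|_{\mathbb T}$ with $G\in zH^2$; since $g$ is zero-free in $\mathbb D$ (being outer), $\phi/g$ lies in the Smirnov class $N^+$ with $L^2$ boundary values $\overline G$, hence $\phi/g\in H^2$; but these boundary values also lie in $\overline{H^2}$, and $H^2\cap\overline{H^2}=\mathbb C$, so $\phi/g$ is a constant $c$ and $G(0)=0$ forces $c=0$, i.e., $h=0$. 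Dense range of $\Psi$ follows from the dense range of $g(S_\ast)$, whose adjoint $h\mapsto P_{H^2_-}(\overline g h)$ is injective since $\overline g h\in H^2_-$ automatically for $h\in H^2_-$ and $g\in H^\infty$. For the converse, if $g$ has a nonconstant inner factor $\vartheta_g$, a multiplicity computation based on the $R_g$-invariant subspaces of the form $\vartheta H^2\oplus\{0\}$ together with \eqref{muker} applied to $R_g^*$ forces $\mu_{R_g}\geq 2>1=\mu_{U_{\mathbb T}}$, precluding $R_g\sim U_{\mathbb T}$.

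For the equivalence of (i)--(iii): the implication (ii)$\Rightarrow$(i) is immediate since $1/g\in H^\infty$ makes $\Phi$ boundedly invertible with $\Phi^{-1}(f+h)=(f-P_{H^2}(gh))/g+h$. For (i)$\Rightarrow$(ii), I argue by contradiction: if $\inf_{\mathbb D}|g|=0$, Lemma~\ref{lemcorona} yields a Blaschke product $\vartheta$ with $\inf_{\mathbb D}(|g|+|\vartheta|)=0$; since $R_g\approx U_{\mathbb T}$ implies $\vartheta(R_g)$ is similar to the unitary $M_\vartheta$ and thus invertible, Lemma~\ref{leminvert} applied to the action induced on $\mathcal K_\vartheta$ forces $\inf_{\mathbb D}(|g|+|\vartheta|)>0$, a contradiction. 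For (i)$\Rightarrow$(iii), the first clause is trivial, and for the second, if $R_g|_{\mathcal M}$ is of class $C_{10}$ then the restriction of $\Phi^{-1}$ gives $R_g|_{\mathcal M}\prec S$, after which Lemma~\ref{lemsimshift} together with Lemma~\ref{leminvers} yields $R_g|_{\mathcal M}\approx S$. The hardest direction, and main obstacle, is (iii)$\Rightarrow$(i): assuming $1/g\notin H^\infty$, I would pick $\vartheta$ from Lemma~\ref{lemcorona}, take $\mathcal M=\operatorname{clos}\vartheta(R_g)(H^2\oplus H^2_-)$, verify that $R_g|_{\mathcal M}$ is of class $C_{10}$ (using that $R_g$ is $C_{11}$ and the inner factor $\vartheta$ supplies $C_{\cdot 0}$ on the compression), and show via Lemma~\ref{leminvert} that a hypothetical similarity $R_g|_{\mathcal M}\approx S$ would produce $\varphi,\psi\in H^\infty$ satisfying $g\varphi+\vartheta\psi=1$, contradicting the choice of $\vartheta$. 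The key difficulty is bridging the corona-type failure into a concrete witness invariant subspace for the failure of the implication in (iii).
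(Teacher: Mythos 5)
Your setup is largely sound: the intertwiners $\Phi$ and $\Psi$ are exactly the quasiaffinities the paper uses in its display \eqref{rgsim}, your Fourier-coefficient bound for the $(1,2)$-entry of $R_g^n$ does give power boundedness, the Smirnov-class argument for $\ker g(S_\ast)=\{0\}$ when $g$ is outer is correct, and (ii)$\Rightarrow$(i) is fine. But two essential steps fail, and both are exactly the places where the paper's proof runs through a single device you never introduce: the explicit limit
$\lim_n\|R_g^n(h_1+h_2)\|^2=\|h_1\|^2+\|P_{H^2_-}gh_2\|^2$ for $h_1\in H^2$, $h_2\in H^2_-$, combined with the family of invariant subspaces $H^2\oplus\overline\chi\overline{\mathcal K_\vartheta}$ (here $\overline\chi\overline{\mathcal K_\vartheta}$ is $S_\ast$-invariant and $S_\ast|_{\overline\chi\overline{\mathcal K_\vartheta}}$ is unitarily equivalent to the $C_{00}$ operator $T_\vartheta$).

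The first gap is the implication ``$R_g\sim U_{\mathbb T}$ $\Rightarrow$ $g$ outer''. Your multiplicity route cannot work: a direct computation gives $\dim\ker R_g\leq1$ and $\dim\ker R_g^*\leq1$ for every $g$ (e.g.\ $R_g^*(f\oplus h)=S^*f\oplus\bigl((f,\text{\bf 1})\overline\chi\overline g+\overline\chi h\bigr)$ forces $f$ constant and $h=-\overline f\,\overline g$, at most one dimension), so \eqref{muker} can never produce the bound $\mu_{R_g}\geq 2$, and you give no other justification for that bound. The correct short argument: if $\vartheta$ is a nontrivial inner factor of $g$, then $gh\in H^2$, i.e.\ $P_{H^2_-}(gh)=0$, for every $h\in\overline\chi\overline{\mathcal K_\vartheta}$ (since $\mathcal K_\vartheta=\vartheta\overline\chi\overline{\mathcal K_\vartheta}$), so by the limit formula $\|R_g^n(0\oplus h)\|\to0$; thus $R_g$ is not of class $C_{1\cdot}$, whereas $R_g\prec U_{\mathbb T}$ forces $C_{1\cdot}$.

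The second gap is (iii)$\Rightarrow$(ii), which you yourself flag as the unresolved ``key difficulty''; it is the heart of the proposition. Your candidate witness $\mathcal M=\operatorname{clos}\vartheta(R_g)(H^2\oplus H^2_-)$ is the wrong subspace: $\vartheta(S_\ast)$ has dense range in $H^2_-$, so the projection of $\mathcal M$ onto $H^2_-$ is dense, there is no visible reason for $R_g|_{\mathcal M}$ to be of class $C_{\cdot0}$, and no route from a hypothetical similarity $R_g|_{\mathcal M}\approx S$ back to the corona identity. The paper instead takes $\mathcal M_\vartheta=H^2\oplus\overline\chi\overline{\mathcal K_\vartheta}$ for an \emph{arbitrary} inner $\vartheta$: this is $R_g$-invariant, the restriction is $C_{10}$ (upper triangular with diagonal entries $S$ and an operator unitarily equivalent to $T_\vartheta$, which is $C_{00}$, giving $C_{\cdot0}$ via {\cite[Theorem 3]{ker89}}; $C_{1\cdot}$ comes from $R_g\sim U_{\mathbb T}$), hence by hypothesis (iii) similar to $S$ and so uniformly bounded below in powers; the limit formula then yields precisely \eqref{cminus} for the pair $(g,\vartheta)$, Lemma \ref{leminvert} converts this to $\inf_{\mathbb D}(|g|+|\vartheta|)>0$, and since $\vartheta$ was arbitrary Lemma \ref{lemcorona} gives $1/g\in H^\infty$. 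With this in place your separate sketch of (i)$\Rightarrow$(ii) is redundant (and, as written, also unsupported, since invertibility of $\vartheta(R_g)$ is not the hypothesis of Lemma \ref{leminvert}).
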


\begin{proof} Easy calculation shows that 
$$R_g^n(h_1+h_2)=\chi^nh_1+P_{H^2}\chi^n P_{H^2_-}gh_2+P_{H^2_-}\chi^nh_2$$
 for every  $ n\in\mathbb N$, $ h_1\in H^2$  and $ h_2\in H^2_-$. 
 Therefore, 
\begin{equation}\label{rglim} \lim_n\|R_g^n(h_1+h_2)\|^2=\|h_1\|^2+\|P_{H^2_-}gh_2\|^2 
\ \  \text{ for every } \ h_1\in H^2 \text{  and }  h_2\in H^2_-.\end{equation}
If $g$ has a nontrivial inner part, say  $\vartheta$, then $P_{H^2_-}gh=0$ 
for every $h\in\overline\chi\overline{\mathcal K_\vartheta}$. It follows from \eqref{rglim} 
that such $R_g$ can not be of class $C_{1\cdot}$. Therefore, if $R_g\sim U_{\mathbb T}$, then $g$ is outer. 
Conversely, if $g$ is outer, then the quasiaffinities 
\begin{equation}\label{rgsim}\begin{pmatrix} I_{H^2} & \mathbb O \\ 
\mathbb O &  g(S_\ast)\end{pmatrix} 
\ \ \ \text{ and } \ \ \ 
\begin{pmatrix} g(S) & P_{H^2}g(U_{\mathbb T})|_{H^2_-} \\ \mathbb O & I_{H^2_-} \end{pmatrix}\end{equation}
realize the relation $R_g\sim U_{\mathbb T}$. Moreover, their product is $g(U_{\mathbb T})$. 
If $1/g\in H^\infty$, the quasiaffinities from \eqref{rgsim} are invertible. Thus, (ii)$\Rightarrow$(i). 

The relation (i)$\Rightarrow$(iii) is trivial. Suppose that (iii) is fulfilled. Then  $g$ is outer 
(as was proved just above).  Let $\vartheta\in H^\infty$ be an inner function. Then 
$H^2\oplus\overline\chi\overline{\mathcal K_\vartheta}\in\operatorname{Lat}R_g$, and 
$R_g|_{H^2\oplus\overline\chi\overline{\mathcal K_\vartheta}}$ is of class $C_{10}$. Indeed,  
$R_g|_{H^2\oplus\overline\chi\overline{\mathcal K_\vartheta}}$ is of class $C_{1\cdot}$, 
because $R_g\sim U_{\mathbb T}$. Furthermore, 
$$R_g|_{H^2\oplus\overline\chi\overline{\mathcal K_\vartheta}}=\begin{pmatrix} S & \ast 
\\ \mathbb O & P_{H^2_-}S_\ast|_{\overline\chi\overline{\mathcal K_\vartheta}} \end{pmatrix},$$
and $P_{H^2_-}S_\ast|_{\overline\chi\overline{\mathcal K_\vartheta}}\cong T_\vartheta$. 
In particular,  $P_{H^2_-}S_\ast|_{\overline\chi\overline{\mathcal K_\vartheta}}$ 
is of class $C_{00}$ {\cite[Proposition III.4.2]{sfbk}}. Clearly, $S$ is of class $C_{10}$. Consequently, 
$R_g|_{H^2\oplus\overline\chi\overline{\mathcal K_\vartheta}}$ is of class $C_{\cdot 0}$ by {\cite[Theorem 3]{ker89}} 
(applied to adjoint). 

Now suppose that $R_g|_{H^2\oplus\overline\chi\overline{\mathcal K_\vartheta}}\approx S$. Then there exists $c>0$ such that 
$\|R_g^n x\|\geq c\|x\|$ for every $n\in\mathbb N$ and every $x\in H^2\oplus\overline\chi\overline{\mathcal K_\vartheta}$. 
It follows from \eqref{rglim} that \eqref{cminus} is fulfilled for $g$ and  $\vartheta$. By Lemma \ref{leminvert}, 
\eqref{corona} is fulfilled for $g$ and $\vartheta$.

Since  $\vartheta$ was an  arbitrary inner function, $1/g\in H^\infty$  by Lemma \ref{lemcorona}.  Thus, (iii)$\Rightarrow$(ii). 
\end{proof}

\begin{remark}\label{remcassier} For $g\in H^\infty$ let $ R_g$ be defined by \eqref{rg}. Then $R_g$ is similar to a contraction by {\cite[Corollary 4.2]{cassier}}.
\end{remark}

\begin{proposition}\label{proprg} Suppose that $T$ is a power bounded operator (on a Hilbert space), $T\sim U_{\mathbb T}$, and there exists 
$\mathcal N\in\operatorname{Lat}T$ such that $T|_{\mathcal N}\approx S$ and $T^\ast|_{\mathcal N^\perp}\approx S$. Then there exists 
$g\in H^\infty$ such that $T\approx R_g$, where $R_g$ is defined by \eqref{rg}.\end{proposition}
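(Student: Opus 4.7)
\emph{Step 1 (Reduction to a matrix model).} The hypotheses $T|_{\mathcal N}\approx S$ and $T^*|_{\mathcal N^\perp}\approx S\cong S_*^*$ (the latter isomorphism realized by the unitary $U\colon H^2\to H^2_-$, $\chi^n\mapsto\overline\chi^{n+1}$) give $P_{\mathcal N^\perp}T|_{\mathcal N^\perp}\approx S_*$. I would pick invertible intertwiners $Y_1\colon H^2\to\mathcal N$ and $Y_2\colon H^2_-\to\mathcal N^\perp$ and conjugate by $Y_1\oplus Y_2$; this reduces the problem to the case $\mathcal H=H^2\oplus H^2_-$ and $T=\widetilde T=\begin{pmatrix}S & B \\ \mathbb O & S_*\end{pmatrix}$ for some bounded $B\colon H^2_-\to H^2$, still with $\widetilde T\sim U_{\mathbb T}$. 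It then suffices to show $\widetilde T\approx R_g$ for a suitable $g\in H^\infty$.

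\emph{Step 2 (Canonical intertwining, normalized).} Since $\widetilde T\sim U_{\mathbb T}$ puts $\widetilde T$ in class $C_{11}$ with isometric asymptote $\cong U_{\mathbb T}$, the canonical intertwining map provides a quasiaffinity $X_+\colon H^2\oplus H^2_-\to L^2$ with $X_+\widetilde T=U_{\mathbb T}X_+$. Because $\widetilde T|_{H^2\oplus 0}=S$ is already isometric, the restriction $X_+|_{H^2\oplus 0}$ is itself an isometry intertwining $S$ with $U_{\mathbb T}$, and therefore equals $M_\psi$ for some unimodular $\psi\in L^\infty$. I would replace $X_+$ by $M_{\overline\psi}X_+$---still an intertwining quasiaffinity, since $M_{\overline\psi}$ is unitary on $L^2$ and commutes with $U_{\mathbb T}$---so that henceforth $X_+|_{H^2\oplus 0}$ is the inclusion $H^2\hookrightarrow L^2$. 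Set $X_2:=X_+|_{0\oplus H^2_-}\colon H^2_-\to L^2$.

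\emph{Step 3 (Decomposition of $B$ and extraction of $g$).} Applying $X_+\widetilde T=U_{\mathbb T}X_+$ to vectors $0\oplus h_2$ yields $\chi X_2 h_2-X_2 S_* h_2=B h_2$ for every $h_2\in H^2_-$. Decompose $X_2=X_2^++X_2^-$ via $L^2=H^2\oplus H^2_-$, so $X_2^+\colon H^2_-\to H^2$ and $X_2^-\colon H^2_-\to H^2_-$. Using $\chi k=(k,\overline\chi)_{L^2}\text{\bf 1}+S_* k$ for $k\in H^2_-$, the intertwining identity splits into an $H^2_-$-part, $X_2^-S_*=S_*X_2^-$, and an $H^2$-part
$$
B=(SX_2^+-X_2^+S_*)+(X_2^-\,\cdot\,,\overline\chi)_{L^2}\text{\bf 1}.
$$
Since $S_*\cong S^*$ via $U$ and, by \eqref{ssrefl}, the commutant of $S^*$ equals $\{\psi(S)^*:\psi\in H^\infty\}$, there exists $\psi\in H^\infty$ with $X_2^-=U\psi(S)^* U^{-1}$ and $\|\psi\|_\infty=\|X_2^-\|$. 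A short computation then gives $(X_2^-)^*\overline\chi=\overline\chi\overline g$, where $g(z):=\sum_{n\geq 0}\overline{\hat\psi(n)}z^n\in H^\infty$; consequently the rank-one piece above equals $B_g$.

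\emph{Step 4 (Similarity and main obstacle).} The preceding identity becomes $B-B_g=S\beta-\beta S_*$ with $\beta:=X_2^+$ bounded. The invertible operator
$$
M=\begin{pmatrix} I & \beta \\ \mathbb O & I \end{pmatrix}
$$
then satisfies $M\widetilde T=R_gM$ by a direct verification, yielding $\widetilde T\approx R_g$ and hence $T\approx R_g$. The main technical hurdle is the normalization in Step~2: it is precisely the freedom to replace $X_+$ by $M_{\overline\psi}X_+$ that converts the intertwining identity into the clean decomposition of $B$ as a coboundary plus a single rank-one piece matching the template of $R_g$, with the $H^\infty$-function $g$ forced upon us by the commutant of $S_*$.
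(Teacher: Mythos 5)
Your proof is correct and follows essentially the same route as the paper's: both extract $g$ from the $H^2_-$-component of the canonical intertwining map restricted to the complement of $\mathcal N$ (which commutes with $S_*$ and hence is given by an $H^\infty$-function), and both use its $H^2$-component as the off-diagonal entry of an upper-triangular similarity that absorbs the remaining coboundary term. The paper merely packages the reduction to the matrix model and this absorption into a single conjugation $Z$ built directly from the blocks of $X_+$, whereas you perform them in separate steps; the mechanism is identical.
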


\begin{proof} Denote by $\mathcal H$ the space on which $T$ acts and 
by $X_+$ the canonical intertwining mapping for $T$ and $U_{\mathbb T}$.    
Since $T|_{\mathcal N}\approx S$, we have that $X_+$ is left invertible on $\mathcal N$. 
Without loss of generality we can assume that $X_+\mathcal N=H^2$. Set  
$$Y=P_{H^2_-}X_+|_{\mathcal N^\perp} \ \ \ \ \text{and } \ \ \ 
Z=\begin{pmatrix} X_+|_{\mathcal N} & P_{H^2}X_+|_{\mathcal N^\perp} \\ \mathbb O & I_{\mathcal N^\perp} \end{pmatrix}.$$
Then $Y\in\mathcal L(\mathcal N^\perp,H^2_-)$, $YP_{\mathcal N^\perp}T|_{\mathcal N^\perp}=S_\ast Y$,
$Z\in \mathcal L(\mathcal H, H^2\oplus\mathcal N^\perp)$, $Z$ is invertible, and 
$$ ZTZ^{-1}=\begin{pmatrix} S & P_{H^2}U_{\mathbb T}|_{H^2_-}Y 
\\ \mathbb O & P_{\mathcal N^\perp}T|_{\mathcal N^\perp}\end{pmatrix}.$$
Since $S\cong (S_\ast)^\ast$ and $T^\ast|_{\mathcal N^\perp}\approx S$, 
there exists $Z_1\in \mathcal L(\mathcal N^\perp,H^2_-)$ such that $Z_1$ is invertible 
and $Z_1P_{\mathcal N^\perp}T|_{\mathcal N^\perp}Z_1^{-1}=S_\ast$. We have 
$$\begin{pmatrix}I_{H^2} & \mathbb O \\ \mathbb O & Z_1\end{pmatrix}
\begin{pmatrix} S & P_{H^2}U_{\mathbb T}|_{H^2_-}Y 
\\ \mathbb O & P_{\mathcal N^\perp}T|_{\mathcal N^\perp}\end{pmatrix}
\begin{pmatrix}I_{H^2}&\mathbb O \\ \mathbb O & Z_1^{-1}\end{pmatrix}=
\begin{pmatrix} S & P_{H^2}U_{\mathbb T}|_{H^2_-}Y Z_1^{-1}
\\ \mathbb O & S_\ast\end{pmatrix}.$$
Since $YZ_1^{-1}$ commutes with $S_\ast$, there exists $g\in H^\infty$ 
such that $YZ_1^{-1}h=P_{H^2_-}gh$ for every $h\in H^2_-$. Therefore, 
$$P_{H^2}U_{\mathbb T}|_{H^2_-} Y Z_1^{-1} = (\cdot,\overline\chi\overline g)\text{\rm \bf{1}}.\qedhere$$
\end{proof}

\begin{theorem}\label{thtrg}
 Suppose that $T$ is a power bounded operator (on a Hilbert space), $T\sim U_{\mathbb T}$, and there exists 
$\mathcal N\in\operatorname{Lat}T$ such that $T^\ast|_{\mathcal N^\perp}\approx S$. 
Furthermore, suppose that $T|_{\mathcal M}\approx S$ for every $\mathcal M\in\operatorname{Lat}T$ 
such that $T|_{\mathcal M}$ is of class $C_{10}$. Then $T\approx U_{\mathbb T}$.\end{theorem}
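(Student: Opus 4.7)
The plan is to apply Proposition \ref{proprg} to bring $T$ into the form $R_g$ and then use condition (iii) of Proposition \ref{proprgsim} to conclude $R_g \approx U_{\mathbb T}$, whence $T \approx U_{\mathbb T}$.

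First I would establish $T|_{\mathcal N} \approx S$, so that Proposition \ref{proprg} becomes available. Since $T \sim U_{\mathbb T}$ and $U_{\mathbb T}^\ast \cong U_{\mathbb T}$ (both are simple bilateral shifts), passing to adjoints gives $T^\ast \sim U_{\mathbb T}$. Now $\mathcal N^\perp \in \operatorname{Lat} T^\ast$ and $T^\ast|_{\mathcal N^\perp} \approx S$ by hypothesis, so Lemma \ref{lemsimshift} applied to $T^\ast$ and to the invariant subspace $\mathcal N^\perp$ shows that $P_{\mathcal N} T^\ast|_{\mathcal N} = (T|_{\mathcal N})^\ast$ is of class $C_{01}$, i.e., $T|_{\mathcal N}$ is of class $C_{10}$. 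The standing hypothesis then forces $T|_{\mathcal N} \approx S$.

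Next I would invoke Proposition \ref{proprg} to obtain $g \in H^\infty$ and an invertible $Z$ with $ZT = R_g Z$; in particular $R_g \sim U_{\mathbb T}$. To verify condition (iii) of Proposition \ref{proprgsim}, let $\mathcal M \in \operatorname{Lat} R_g$ with $R_g|_{\mathcal M}$ of class $C_{10}$. Then $Z^{-1}\mathcal M \in \operatorname{Lat} T$ and $T|_{Z^{-1}\mathcal M}$ is similar to $R_g|_{\mathcal M}$, hence also of class $C_{10}$. The standing hypothesis gives $T|_{Z^{-1}\mathcal M} \approx S$, so $R_g|_{\mathcal M} \approx S$ as well. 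This verifies (iii), and Proposition \ref{proprgsim} yields $R_g \approx U_{\mathbb T}$, hence $T \approx U_{\mathbb T}$.

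The main obstacle is the very first step: the hypothesis only provides $T^\ast|_{\mathcal N^\perp} \approx S$ on one side, together with a $C_{10}$-to-$S$ similarity assumption on the other; one must recognize that these can be combined, through Lemma \ref{lemsimshift} applied to $T^\ast$, to force the missing $T|_{\mathcal N} \approx S$ needed by Proposition \ref{proprg}. Once this observation is made, the rest is a direct assembly of the two structural propositions already proved in this subsection.
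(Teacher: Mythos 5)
Your proposal is correct and follows essentially the same route as the paper: apply Lemma \ref{lemsimshift} to $T^\ast$ to see that $T|_{\mathcal N}$ is of class $C_{10}$, hence $T|_{\mathcal N}\approx S$ by hypothesis, then invoke Proposition \ref{proprg} to get $T\approx R_g$ and Proposition \ref{proprgsim} (condition (iii)) to conclude $R_g\approx U_{\mathbb T}$. The only difference is that you spell out details the paper leaves implicit, namely that $T^\ast\sim U_{\mathbb T}$ and that condition (iii) transfers to $R_g$ through the similarity.
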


\begin{proof} By Lemma \ref{lemsimshift} applied to $T^\ast$, $T|_{\mathcal N}$ is of class $C_{10}$. By assumption, 
$T|_{\mathcal N}\approx S$. By Proposition \ref{proprg}, $T\approx R_g$ for some $g\in H^\infty$. By Proposition \ref{proprgsim}, 
$R_g\approx U_{\mathbb T}$.
\end{proof}

\begin{corollary} Suppose that $T$ is a  polynomially bounded operator (on a Hilbert space), $T\sim U_{\mathbb T}$, and  
 $T|_{\mathcal M}\approx S$ for every $\mathcal M\in\operatorname{Lat}T$ 
such that $T|_{\mathcal M}$ is of class $C_{10}$. Then $T\approx U_{\mathbb T}$.
\end{corollary}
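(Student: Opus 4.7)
The plan is to reduce to Theorem \ref{thtrg}. Since polynomial boundedness implies power boundedness, and since the corollary supplies the hypothesis about $C_{10}$ invariant subspaces verbatim, the only missing ingredient is an $\mathcal N\in\operatorname{Lat}T$ for which $T^\ast|_{\mathcal N^\perp}\approx S$. I will produce such an $\mathcal N$ by applying \cite{gam19} (equivalently, Theorem \ref{thmnew1}) not to $T$ itself but to its adjoint $T^\ast$, then dualize.

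First I would check that $T^\ast$ meets the hypotheses of \cite{gam19}. Polynomial boundedness passes to the adjoint with the same constant, because for any polynomial $p$ one has $p(T^\ast)=q(T)^\ast$ with $q(z)=\sum\overline{a_k}z^k$ and $\|q\|_\infty=\|p\|_\infty$. Next, $T$ is absolutely continuous: the hypothesis $T\sim U_{\mathbb T}$ gives a quasiaffinity from $T$ into the a.c. unitary $U_{\mathbb T}$, so by Lemma \ref{lem1new} (applied to the decomposition $T=T_a\dotplus T_s$) the singular summand must vanish, i.e.\ $T=T_a$; the same argument, applied to $T^\ast\sim U_{\mathbb T}^\ast\cong U_{\mathbb T}$, shows $T^\ast$ is a.c. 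Finally, taking adjoints of the quasiaffinities realizing $T\sim U_{\mathbb T}$ yields $T^\ast\sim U_{\mathbb T}^\ast\cong U_{\mathbb T}$, so in particular $T^\ast\buildrel d\over\prec U_{\mathbb T}$.

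Now \cite{gam19} (used in the proof of Theorem \ref{thmnew1}) provides $\mathcal L\in\operatorname{Lat}T^\ast$ and an invertible intertwiner $Y_0\in\mathcal L(H^2,\mathcal L)$ with $Y_0 S=T^\ast|_{\mathcal L}Y_0$, so $T^\ast|_{\mathcal L}\approx S$. Setting $\mathcal N=\mathcal L^\perp$ gives $\mathcal N\in\operatorname{Lat}T$ and $T^\ast|_{\mathcal N^\perp}=T^\ast|_{\mathcal L}\approx S$, exactly the structural hypothesis demanded by Theorem \ref{thtrg}. Together with $T\sim U_{\mathbb T}$ and the assumption that $T|_{\mathcal M}\approx S$ whenever $T|_{\mathcal M}$ is of class $C_{10}$, Theorem \ref{thtrg} then yields $T\approx U_{\mathbb T}$.

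The only step that could look subtle is the verification that the hypotheses of \cite{gam19}/Theorem \ref{thmnew1} transfer to $T^\ast$; in fact each ingredient (polynomial bound, absolute continuity, dense-range intertwining with $U_{\mathbb T}$) is preserved under taking adjoints, so this reduces to a short bookkeeping argument, and no substantive new idea is required beyond recognizing the adjoint-dualization.
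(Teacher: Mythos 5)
Your proposal is correct and follows essentially the same route as the paper, which likewise obtains the required subspace $\mathcal N$ by applying \cite{gam19} to $T^\ast$ and then invokes Theorem \ref{thtrg}; your write-up merely makes explicit the bookkeeping (adjoints of quasiaffinities, preservation of the polynomial bound, absolute continuity) that the paper leaves implicit. One small remark: the absolute continuity of $T$ and $T^\ast$ is better justified by the fact, used elsewhere in the paper, that no nonzero transformation intertwines a singular unitary with an a.c.\ operator (applied to the injective quasiaffinity realizing $T\prec U_{\mathbb T}$), rather than by Lemma \ref{lem1new} as stated, but this does not affect the validity of the argument.
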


\begin{proof} The operator $T$ satisfies to assumptions of Theorem \ref{thtrg}  by {\cite[Theorem 1.1]{gam19}}  
 (applied to $T^\ast$).
\end{proof}

\end{document}